\theoremstyle{plain}
\numberwithin{equation}{section}
\newtheorem{proposition}[equation]{Proposition}
\newtheorem{theorem}{Theorem}
\newtheorem{lemma}[equation]{Lemma}
\theoremstyle{definition}
\newtheorem{remark}[equation]{Remark}
\newtheorem{example}[equation]{Example}
\def    \R  {{\Bbb R}}
\def    \Z  {{\Bbb Z}}
\def    \CP {{\Bbb {CP}}}
\def    \C  {{\Bbb C}}
\def    \Tilde  {\widetilde}
\def    \Gt     {\Tilde{G}}
\begin{document}
\title[K-contact manifolds with minimal closed Reeb orbits]{K-contact manifolds with minimal closed Reeb orbits}

\author{Hui Li} 
\address{School of mathematical Sciences\\
        Soochow University\\
        Suzhou, 215006, China.}
        \email{hui.li@suda.edu.cn}

\thanks{2010 classification. Primary:  53D10, 53D05, 53D20; Secondary: 55N10, 57R20}
\keywords{K-contact manifold, Reeb orbit, Hamiltonian action, cohomology ring, Chern classes}

\begin{abstract}
We use the Boothby-Wang fibration to construct certain simply connected K-contact manifolds and we give sufficient and necessary conditions on when such K-contact manifolds are homeomorphic to the odd dimensional spheres. If the symplectic base manifold of the fibration admits a Hamiltonian torus action,  we show that on the total space of the fibration, other than the regular K-contact structures which have infinitely many closed Reeb orbits, there are K-contact structures whose closed Reeb orbits correspond exactly to the fixed points of the Hamiltonian torus action on the base manifold. Then we give a collection of examples of compact simply connected K-contact manifolds with minimal number of closed Reeb orbits which are not homeomorphic to the odd dimensional spheres, while having the real cohomology ring of the spheres. Finally, we give a family of examples of simply connected K-contact manifolds which have one more than the minimal number of closed Reeb orbits and which do not have the real cohomology ring of the spheres.
\end{abstract}

 \maketitle

\section{introduction}
 
A contact manifold is a smooth manifold $M$ of odd dimension $2n+1$ with a contact 1-form $\alpha$ such that $\alpha\wedge(d\alpha)^n\neq 0$ everywhere. The Reeb vector field $R$ of $\alpha$ is the vector field characterized by $\alpha (R) =1$ and $d\alpha(R, \cdot)=0$; the flow of $R$ is called the contact flow or the Reeb flow. Hence the contact 1-form $\alpha$ defines a symplectic form $\omega = d\alpha$ on the $2n$-dimensional subbundle $\ker(\alpha)$. The contact manifold
$(M, \alpha)$ is called {\bf regular} if the Reeb vector field $R$ of $\alpha$ generates a free circle action. A Riemannian metric $g$ on $M$ is said to be {\bf adapted to $\alpha$} if $g$ is preserved by the Reeb flow of $\alpha$, and there exists an almost complex structure $J$ on $\ker(\alpha)$ such that $g|_{\ker(\alpha)}$, $J$ and $d\alpha$ are compatible, i.e., $g(X, Y) = d\alpha(X, JY)$ for all $X, Y$ in $\ker(\alpha)$. When there is an adapted Riemannian metric $g$ on $(M, \alpha)$, we call $(M, \alpha, g)$ a {\bf K-contact manifold}, and call $(\alpha, g)$ 
a K-contact structure on $M$. For a compact K-contact manifold $(M, \alpha, g)$, the closure of the Reeb flow in the isometry group of $M$ is a compact abelian Lie group, a $k$-dimensional torus $T^k$, with $1\leq k\leq n+1$ if $\dim(M)=2n+1$ (see \cite{{MS}, {R}}), and $k$ is called the {\bf rank} of the K-contact manifold $(M, \alpha, g)$.  Clearly the torus $T^k$ acts on the K-contact manifold $(M, \alpha, g)$ preserving the contact form $\alpha$ and the Riemannian metric $g$. On a smooth manifold $M$, there may exist different contact forms and K-contact structures with different ranks. For example, the sphere $S^{2n+1}$ with its standard contact form $\alpha_0$ induced from the standard symplectic form on $\C^{n+1}$ is a regular contact manifold; $\alpha_0$ and the standard metric $g_0$ on $S^{2n+1}$ give $S^{2n+1}$ a K-contact structure $(\alpha_0, g_0)$ of rank 1. A perturbation of $\alpha_0$ can give $S^{2n+1}$ a different contact form $\alpha$, which has an adapted Riemannian metric $g$, a perturbation of $g_0$. We may have a K-contact manifold $(S^{2n+1}, \alpha, g)$ of rank $k$ for any $2\leq k\leq n+1$, see Example~\ref{sphere}.

Recall that a {\bf contact toric manifold} is a contact manifold of dimension $2n+1$ with an effective 
contact $T^{n+1}$-action; it is called {\bf of Reeb type} if the Reeb vector field of the contact form is generated by a one parameter subgroup action of $T^{n+1}$.
We can equip contact toric manifolds of Reeb type with $T^{n+1}$-invariant Riemannian metrics adapted to the contact form, hence contact toric manifolds of Reeb type is a special subclass of K-contact manifolds.

In this paper, we are interested in working on a special class of K-contact manifolds which are principal circle bundles over symplectic manifolds with symplectic forms representing integral cohomology classes. The fact that these spaces possess contact structures is due to Boothby and Wang. Recall that a smooth fiber bundle $\pi\colon M\to N$ with fiber $S^1$ is called a  {\it principal} $S^1$-bundle if $S^1$ acts smoothly and freely on $M$ and the fiber preserving local trivialization maps are $S^1$-equivariant, where $S^1$ acts on local trivializations 
$U_{\alpha}\times S^1$ by multiplication on the $S^1$-factor, $\{U_{\alpha}\}$ being an open cover of $N$. It is known that principal circle bundles over a compact manifold 
$N$ are classified by elements in $H^2(N; \Z)$ through a characteristic class, called the Euler class of the circle bundle (\cite{K}). Here we are only concerned with principal circle bundles with Euler classes  in the image of the natural homomorphism $i\colon H^2(N; \Z)\to H^2(N; \R)$, namely the Betti part of $H^2(N; \Z)$. Elements in the image of $i$ are called {\bf integral classes}.
If $N$ is simply connected or $H_1(M; \Z)=0$ or $H_1(M; \Z)$ has no torsion, then $H^2(N; \Z)$ has no torsion by the universal coefficient theorem; in this case, $H^2(N; \Z)$ is isomorphic to its image in  $H^2(N; \R)$.  An integral  class  is called {\bf primitive} if it is not a positive integer multiple of any other integral class; for example, when $H^2(N; \Z)\cong \Z \hookrightarrow \R$, only the classes which are identified with $\pm 1$ are primitive integral. In our theorems, according to the necessity, 
we take integral or primitive integral classes in the degree 2 cohomology groups of the base manifolds of principal circle bundles.

Before the contact structures are very involved, we first make some claims on principal circle bundles over general compact manifolds, and  over compact symplectic manifolds with Hamiltonian circle actions.  These are our Theorems~\ref{thm1} and \ref{thm2} below. In this paper, when we say the integral (or real) cohomology group or the integral (or real) cohomology ring of a manifold, we mean the cohomology group or the cohomology ring of the manifold in integer (or real) coefficients. 

\begin{theorem}\label{thm1}
 Let $N$ be a compact smooth manifold of dimension $2n$,  
$x$ be a primitive integral degree 2 cohomology class of $N$, and $\pi\colon M\to N$ be a principal circle bundle over $N$ with Euler class $x$. Then the following claims hold.
\begin{enumerate}
\item $M$ has the real cohomology ring of $S^{2n+1}$ if and only if $N$ has the real cohomology ring of $\CP^n$, i.e., $H^*(N; \R)=\R[x]/x^{n+1}$ as a ring.
\item $M$ has the integral cohomology ring of $S^{2n+1}$ if and only if $N$ has the integral cohomology ring of $\CP^n$, i.e., $H^*(N; \Z)=\Z[x]/x^{n+1}$ as a ring.
\item $M$ is homeomorphic to a sphere $S^{2n+1}$ if and only if $\pi_1(N)=1$ and 
$H^*(N; \Z)=\Z[x]/x^{n+1}$ as a ring.
\end{enumerate}
\end{theorem}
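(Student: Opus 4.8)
The plan is to make the Gysin sequence of the principal circle bundle $S^1\hookrightarrow M\xrightarrow{\pi}N$ the single engine driving all three parts. With coefficients in a ring (either $\R$ or $\Z$) it reads
\[
\cdots\to H^{j-2}(N)\xrightarrow{\cup x}H^{j}(N)\xrightarrow{\pi^*}H^{j}(M)\xrightarrow{\pi_*}H^{j-1}(N)\xrightarrow{\cup x}H^{j+1}(N)\to\cdots,
\]
where $x$ is the Euler class, and exactness repackages each $H^j(M)$ into a short exact sequence
\[
0\to\operatorname{coker}\!\big(\cup x\colon H^{j-2}(N)\to H^{j}(N)\big)\to H^{j}(M)\to\ker\!\big(\cup x\colon H^{j-1}(N)\to H^{j+1}(N)\big)\to 0.
\]
I would then analyze this one degree at a time, running the argument once over $\R$ for part (1) and once over $\Z$ for part (2).

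For the forward direction of (1) and (2), I assume $H^*(N)=k[x]/x^{n+1}$ (with $k=\R$ or $\Z$). Then $\cup x\colon H^{2i}(N)\to H^{2i+2}(N)$ is an isomorphism for $0\le i\le n-1$ and every odd group vanishes; feeding this into the short exact sequence kills $H^j(M)$ for $1\le j\le 2n$ and leaves $H^0(M)=H^{2n+1}(M)=k$, so $M$ has the cohomology of $S^{2n+1}$, the ring structure being forced since there are no products of positive-degree classes to compute. For the converse I assume $H^j(M)=0$ for $1\le j\le 2n$; the short exact sequence then forces, for each such $j$, both $\operatorname{coker}(\cup x\colon H^{j-2}\to H^j)=0$ and $\ker(\cup x\colon H^{j-1}\to H^{j+1})=0$. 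Reading this off degree by degree shows $\cup x\colon H^m(N)\to H^{m+2}(N)$ is an isomorphism for $0\le m\le 2n-2$ and injective for $m=2n-1$; since $\dim N=2n$ gives $H^{2n+1}(N)=0$, this last injectivity forces $H^{2n-1}(N)=0$, and the resulting chain of isomorphisms starting from $H^0(N)=k$ yields $H^{2i}(N)=k\cdot x^i$ for $0\le i\le n$ with all odd groups zero, i.e. $H^*(N)=k[x]/x^{n+1}$. The only extra care over $\Z$ is that vanishing cokernel and kernel still mean surjective and injective (automatic), and that the degree-$2$ generator produced is exactly $x$, consistent with primitivity.

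For part (3), note first that by part (2) the hypothesis $H^*(N;\Z)=\Z[x]/x^{n+1}$ already makes $M$ an integral cohomology $(2n+1)$-sphere, so the real work is upgrading this to a homeomorphism. I would first show $M$ is simply connected: from the homotopy exact sequence of the fibration, $\pi_1(N)=1$ gives $\pi_1(M)=\operatorname{coker}\big(\partial\colon\pi_2(N)\to\pi_1(S^1)=\Z\big)$, and under the Hurewicz isomorphism $\pi_2(N)\cong H_2(N;\Z)$ the boundary $\partial$ is (up to sign) evaluation against the Euler class $x$; since $x$ generates $H^2(N;\Z)\cong\Z\cong\operatorname{Hom}(H_2(N;\Z),\Z)$, this evaluation is surjective and hence $\pi_1(M)=1$. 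A degree-by-degree application of Hurewicz together with the Whitehead theorem then identifies the simply connected integral homology sphere $M$ with a homotopy $(2n+1)$-sphere, and the topological generalized Poincaré conjecture gives $M\cong S^{2n+1}$. The converse is immediate: if $M\cong S^{2n+1}$, then part (2) gives $H^*(N;\Z)=\Z[x]/x^{n+1}$, while $\pi_1(N)$ is a quotient of $\pi_1(S^{2n+1})=1$.

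The genuinely deep input, and the main obstacle, is the final step of part (3): the appeal to the topological Poincaré conjecture (Newman/Smale for $2n+1\ge 5$ and Perelman for $2n+1=3$), which is why only a homeomorphism, not a diffeomorphism, is asserted. Everything else is linear-algebra bookkeeping inside the Gysin sequence supplemented by Hurewicz and Whitehead; the one spot demanding attention is the surjectivity of the boundary map computing $\pi_1(M)$, which is precisely where the primitivity of the Euler class $x$ is used.
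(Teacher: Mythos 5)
Your proposal is correct and follows essentially the same route as the paper: the Gysin sequence of the circle bundle (read degree by degree, over $\R$ and then over $\Z$) for parts (1) and (2), and for part (3) the homotopy exact sequence of the fibration with the boundary map $\pi_2(N)\to\pi_1(S^1)$ identified with pairing against the primitive Euler class to get $\pi_1(M)=1$, followed by the generalized Poincar\'e conjecture. The only cosmetic differences are your packaging of the Gysin sequence into kernel/cokernel short exact sequences and your explicit Hurewicz--Whitehead step upgrading the simply connected integral cohomology sphere to a homotopy sphere, which the paper leaves implicit.
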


Recall that if a compact symplectic manifold $(N, \omega)$ of dimension $2n$ admits a Hamiltonian circle action, then the action has at least $n+1$ fixed points. This is due to the fact that the moment map is a perfect Morse function, and $b_{2i}(N)\geq 1$ for any $0\leq 2i\leq 2n$. 
Now we take a compact symplectic manifold $(N, \omega)$ of dimension $2n$ as a base manifold in a principal circle bundle, then the Euler class of the principal circle bundle is the same as the first Chern class of the principal circle bundle.  We introduce a Hamiltonian circle action on $N$, then a condition on the base manifold is sufficient or sufficient and necessary for the total space to be homeomorphic to $S^{2n+1}$, this is Theorem~\ref{thm2}. In Theorem~\ref{thm2} (2), $c_1(\cdot)$ and $c(\cdot)$ are respectively the first Chern class and total Chern class of the corresponding space.

\begin{theorem}\label{thm2}
Let $(N, \omega)$ be a compact symplectic manifold of dimension $2n$ with $[\omega]$ being a primitive integral cohomology class, and let $\pi\colon M\to N$ be a principal circle bundle with first Chern class $x= [\omega]$. Assume $N$ admits a Hamiltonian circle action with fixed point set consisting of $n+1$ isolated points. Then the following claims hold.
\begin{enumerate}
\item $M$ is homeomorphic to a sphere $S^{2n+1}$ if and only if $H^*(N; \Z)=\Z[x]/x^{n+1}$ as a ring. 
\item If $c_1(N)=(n+1)x$, then $c(N)\cong c(\CP^n)=(1+x)^{n+1}$, and $M$ is homeomorphic to a sphere $S^{2n+1}$. 
\end{enumerate}
\end{theorem}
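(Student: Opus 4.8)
The plan is to reduce both statements to Theorem~\ref{thm1}(3) by exploiting the Morse theory of the moment map, and to extract the integral ring structure and the Chern data from equivariant localization at the fixed points. First I would record the consequences of the Hamiltonian circle action. The moment map $\mu\colon N\to\R$ is a perfect Morse function whose critical set is the fixed point set; since the fixed points are isolated they are nondegenerate, and the Morse index at each is twice the number of negative weights of the linearized action, hence even. With exactly $n+1$ fixed points and $b_{2i}(N)\geq 1$ for every $0\leq 2i\leq 2n$, there must be precisely one fixed point of each even index $0,2,\dots,2n$. Thus $N$ has a CW structure with a single cell in each even dimension $0,2,\dots,2n$ and no odd cells. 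Two things follow at once: $\pi_1(N)=1$ (there are no $1$-cells), and $H_*(N;\Z)$ is free abelian, equal to $\Z$ in each even degree up to $2n$ and $0$ otherwise, with no torsion. Since $x=[\omega]$ is a primitive integral class it generates $H^2(N;\Z)\cong\Z$, and since $[\omega]^n$ is a volume form, $x^i\neq 0$ in $H^{2i}(N;\R)\cong\R$ for every $i$; hence $H^*(N;\R)=\R[x]/x^{n+1}$.

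Part (1) is now immediate: because the circle action forces $\pi_1(N)=1$ for free, Theorem~\ref{thm1}(3) says exactly that $M$ is homeomorphic to $S^{2n+1}$ if and only if $H^*(N;\Z)=\Z[x]/x^{n+1}$, which is the claim.

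For part (2), assume $c_1(N)=(n+1)x$. Since $H^{2i}(N;\R)=\R x^i$ I may write $c_i(N)=\lambda_i x^i$ with $\lambda_0=1$ and $\lambda_1=n+1$, and I set $V=\int_N x^n>0$; the goal is to prove $V=1$ and $\lambda_i=\binom{n+1}{i}$. I would obtain these from the Atiyah--Bott--Berline--Vergne localization formula applied to the fixed point data above: there is one fixed point $p_k$ of index $2k$, at which exactly $k$ of the $n$ tangent weights are negative and $n-k$ are positive. The normalization $c_1(N)=(n+1)x$ together with the rigidity of the one-fixed-point-per-index structure constrains these weights to coincide with those of the standard linear circle action on $\CP^n$ at the corresponding fixed point. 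Once the weights agree fixed point by fixed point, injectivity of the restriction $H^*_{S^1}(N)\to\bigoplus_k H^*_{S^1}(p_k)$ (the action is equivariantly formal) forces the equivariant total Chern class of $N$ to equal that of $\CP^n$, whence $c(N)=(1+x)^{n+1}$; in particular $\int_N c_n=\chi(N)=n+1$, and tracking the weights likewise yields $V=1$. Finally, with $\int_N x^n=1$, Poincaré duality promotes the real ring structure to the integral one: writing $x^i=a_i u_i$ for a generator $u_i$ of $H^{2i}(N;\Z)\cong\Z$, unimodularity of the cup pairing gives $\int_N x^n=\pm a_i a_{n-i}$, so $|a_i|=1$ for every $i$ and $H^*(N;\Z)=\Z[x]/x^{n+1}$. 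Part (1) then gives $M\cong S^{2n+1}$.

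The main obstacle is the weight determination inside part (2): the rational cohomology ring and the value of $c_1$ alone do not pin down the higher Chern classes or the integer $V=\int_N x^n$, so the circle action must be used in an essential way. Verifying that the minimality of the fixed point set, combined with $c_1(N)=(n+1)x$, forces the tangent weights at every fixed point to match those of $\CP^n$ is the delicate combinatorial heart of the argument; everything else is bookkeeping with localization and Poincaré duality.
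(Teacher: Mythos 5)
Your part (1) is essentially the paper's argument: the paper also reduces to Theorem~\ref{thm1}(3) after noting that a Hamiltonian circle action with isolated fixed points forces $\pi_1(N)=1$ (the paper cites \cite[Theorem 0.1]{L0} for this, whereas you derive it from the even-index CW structure coming from the moment map; both are fine).

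Part (2), however, has a genuine gap. You correctly identify that the whole difficulty is to show that the hypothesis $c_1(N)=(n+1)x$, together with the minimal fixed point set, forces the isotropy weights at every fixed point (and the moment map values) to agree with those of the standard linear action on $\CP^n$ --- but you then assert this step rather than prove it, calling it ``the delicate combinatorial heart.'' That heart is precisely the content of the prior results the paper invokes: \cite[Theorem 1]{L} (that $c_1(N)=(n+1)x$ implies the largest weight equals the length of the moment map interval) and \cite[Theorem 2]{L} (that this weight condition then pins down $H^*(N;\Z)=\Z[x]/x^{n+1}$ and $c(N)=(1+x)^{n+1}$). These occupy a separate research paper; nothing in your proposal substitutes for them. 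Note also that the weight determination cannot follow from ``rigidity of the one-fixed-point-per-index structure'' alone: the examples in Section~7 of this paper ($\Tilde G_2(\R^{n+2})$ for odd $n$, the $G_2$ coadjoint orbit, $V_5$, $V_{22}$) all have exactly $n+1$ isolated fixed points, one per even index, yet their weights and cohomology rings differ from those of $\CP^n$; so the condition $c_1(N)=(n+1)x$ must enter in an essential and quantitative way, which your sketch does not supply. The surrounding bookkeeping in your proposal (localization determining the equivariant Chern classes once the weights are known, and the Poincar\'e duality argument upgrading $\int_N x^n=1$ to the integral ring isomorphism, which parallels Remark~3.1 of the paper) is sound, but it all hinges on the unproven weight determination.
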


In Theorem~\ref{thm2}, if there is a Hamiltonian torus action on $N$, then there is a Hamiltonian subcircle action with the same fixed point set as the torus action.  Hence for the claims in Theorem~\ref{thm2}, assuming a circle action on $N$ is more general than assuming a torus action.
See Remark~\ref{tc}.

In Theorem~\ref{thm2} (2), for $2\leq\dim(N)=2n\leq 8$, 
$c_1(N)=(n+1)x$ is also a necessary condition for $M$ to be homeomorphic to $S^{2n+1}$.
This can be seen as follows. By Theorem~\ref{thm1} (3) or Theorem~\ref{thm2} (1), $M$ is homeomorphic to $S^{2n+1}$ implies $H^*(N; \Z)=\Z[x]/x^{n+1}$ as a ring. 
Under the assumption in Theorem~\ref{thm2} for the manifold $N$, for $2\leq\dim(N)=2n\leq 8$, when $H^*(N; \Z)=\Z[x]/x^{n+1}$ as a ring, known results tell us that $c(N)\cong c(\CP^n)$, in particular, $c_1(N)=(n+1)x$ (see the Introduction of \cite{L'} and the references therein).
In general case of Theorem~\ref{thm2} (2), the author does not know if $c_1(N)=(n+1)x$  is a necessary condition for $M$ to be homeomorphic to $S^{2n+1}$. 
In the special case when there is a maximal dimensional symmetry, namely there is a Hamiltonian $T^n$-action on $N$ with exactly $n+1$ isolated fixed points, by our Theorem~\ref{thm1} (3) and Petrie's work (\cite{P}), we have the isomorphism 
$c(N)\cong c(\CP^n)=(1+x)^{n+1}$, in particular $c_1(N)=(n+1)x$. Or, solely for a symplectic manifold $N$ of dimension $2n$, when there is a Hamiltonian $T^n$-action on $N$ with exactly $n+1$ isolated fixed points, by Jang's work in \cite{J} for almost complex torus manifolds, we have $c(N)\cong c(\CP^n)$, in particular $c_1(N)=(n+1)x$ (independent of what $M$ is). 

In Theorem~\ref{thm2}, we assume that the fixed points of the Hamiltonian circle action on $N$ are isolated, this is what we concern in the current work. When the fixed points of the Hamiltonian circle action on $N$ are not isolated, we may still have the claims (1) and (2) of the theorem, and in (2), in certain case, the fact that $M$ is homeomorphic to $S^{2n+1}$ implies $c(N)= c(\CP^n)$.
See Remark~\ref{niso}.

Next, the contact and K-contact structures on the total space of a principal circle bundle over a symplectic base manifold are  more involved.
Let $(N, \omega)$ be a compact symplectic manifold of dimension $2n$ with $[\omega]$ being an integral cohomology class, and let $\pi\colon M\to N$ be a principal circle bundle with first Chern class $[\omega]$.
By Boothby and Wang  (\cite{BW}), there is a connection 1-form $\alpha$ on $M$ such that $d\alpha= \pi^*(\omega)$,  $(M, \alpha)$ is a compact contact manifold, and the Reeb flow of $\alpha$ generates a free circle action which is exactly the principal circle action of the circle bundle. 
In this case, we call the principal circle bundle $\pi\colon M\to N$ a {\bf Boothby-Wang fibration}.
For any Riemannian metric $g_N$ on $N$ compatible with $\omega$, $g=\pi^*(g_N) + \alpha\otimes\alpha$ defines an adapted metric on $M$ (see Section 4), so $(M, \alpha, g)$ becomes a K-contact manifold; it is of rank one since each Reeb orbit of $\alpha$ is closed. Note that we can have many different choices of $g_N$, hence of $g$.
Assume now that  $N$ admits a Hamiltonian torus $T$ action, we will see that we can lift the $T$-action on $N$ to a 
$T$-action on $M$ such that it preserves the connection one form $\alpha$ and the metric $g=\pi^*(g_N) + \alpha\otimes\alpha$ if $g_N$ is $T$-invariant on $N$. Let $S^1$ be the fiber transformation group of the principal circle bundle $\pi\colon M\to N$. Then the $T\times S^1$-action preserves $\alpha$ and $g$ on $M$, so $T\times S^1$ is in the isometry group of the K-contact manifold $(M, \alpha, g)$.  Exactly this more symmetry on $M$ will give us the room to vary the K-contact structures on $M$. We will show that 
other than the regular K-contact structures above which have infinitely many closed Reeb orbits, for each $2\leq k\leq \dim(T\times S^1)$,
$M$ admits a K-contact structure of rank $k$ such that its closed Reeb orbits are in one to one correspondence with the fixed points of the Hamiltonian $T$-action on $N$, which are finite. This is our Theorem~\ref{thm3} below.

\begin{theorem}\label{thm3}
Let $(N, \omega)$ be a compact symplectic manifold of dimension $2n$ 
with $[\omega]$ being an integral class, and let $\pi\colon M\to N$ be a principal circle bundle with first Chern class $[\omega]$. Assume $N$ admits a Hamiltonian torus $T^l$-action with $1\leq l\leq n$, then for any $2\leq k\leq l+1$, there exists a K-contact structure on $M$ of rank $k$ whose closed Reeb orbits are in one to one correspondence with the fixed points of the $T^l$-action on $N$. If $(N, \omega)$ is a K\"ahler manifold, then  these K-contact structures on $M$  are Sasakian structures. 
\end{theorem}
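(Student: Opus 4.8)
The plan is to keep the contact distribution fixed and to deform the Reeb vector field inside the torus $T:=T^l\times S^1$ sitting in the isometry group of the Boothby--Wang K-contact structure $(M,\alpha_0,g)$, where $\alpha_0$ is the connection form with $d\alpha_0=\pi^*\omega$ and its Reeb field $R_0$ generates the fiber $S^1$. Writing $X_i$ for the lifts of the generators of the Hamiltonian $T^l$-action, the identity $\mathcal L_{X_i}\alpha_0=0$ together with the Cartan formula gives $d(\alpha_0(X_i))=-\iota_{X_i}\pi^*\omega=-\pi^*(d\mu_i)$, so each $\alpha_0(X_i)$ is, up to an additive constant, the pullback of a moment-map component, and in particular a bounded function on the compact manifold $M$. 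For $\xi\in\operatorname{Lie}(T)$ let $X_\xi=\sum_i a_iX_i+cR_0$ be the generated vector field and set $f:=\alpha_0(X_\xi)=c+\sum_i a_i\,\alpha_0(X_i)$. Choosing the fiber component $c$ large makes $f>0$ everywhere, and I define the candidate contact form $\alpha:=f^{-1}\alpha_0$.

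A direct computation is the first step: since $\xi$ lies in the abelian Lie algebra of $T$ one has $\mathcal L_{X_\xi}\alpha_0=0$ and $X_\xi f=0$, whence $\iota_{X_\xi}d\alpha_0=-df$ and
\[
\iota_{X_\xi}d\alpha=-f^{-2}\big((X_\xi f)\alpha_0-f\,df\big)+f^{-1}\iota_{X_\xi}d\alpha_0=f^{-1}df-f^{-1}df=0 .
\]
Together with $\alpha(X_\xi)=1$ this shows $X_\xi$ is the Reeb field of $\alpha$; and because $f>0$ one has $\ker\alpha=\ker\alpha_0$ and $d\alpha|_{\ker\alpha}=f^{-1}\pi^*\omega|_{\ker\alpha}$, which is nondegenerate, so $\alpha$ is a contact form. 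For the adapted metric I lift a $T$-invariant $\omega$-compatible almost complex structure $J_N$ on $N$ to $J$ on $\ker\alpha_0$ through the horizontal isomorphism $\pi_*$, and declare $g'|_{\ker\alpha}(X,Y)=d\alpha(X,JY)=f^{-1}\pi^*g_N$, $g'(X_\xi,X_\xi)=1$, with $\ker\alpha\perp X_\xi$. Since every ingredient is $T$-invariant and $\ker\alpha$ is $X_\xi$-invariant (as $\mathcal L_{X_\xi}\alpha=0$), $g'$ is positive definite and preserved by the Reeb flow, so $(M,\alpha,g')$ is K-contact. When $(N,\omega)$ is K\"ahler I may take $J_N$ integrable; the transverse holomorphic structure along the Reeb foliation is then K\"ahler, and the resulting structure is Sasakian.

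The heart of the argument, and the step I expect to be the main obstacle, is to choose $\xi$ so that the rank is exactly $k$ and the closed Reeb orbits are exactly the fibers over $\operatorname{Fix}(T^l)$. First I would produce a subtorus $S\subseteq T^l$ with $\dim S=k-1$ and $\operatorname{Fix}(S)=\operatorname{Fix}(T^l)$: a point $q$ is $S$-fixed iff the Lie line of a generating circle lies in $\operatorname{Lie}(H_q)$, and since a compact torus action has only finitely many isotropy subalgebras, a primitive integral vector avoiding the finitely many proper subspaces $\operatorname{Lie}(H_q)$ over non-fixed $q$ generates a circle with the full fixed-point set; extending its rational line to a $(k-1)$-dimensional subtorus (possible since $1\le k-1\le l$) gives the desired $S$. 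I then set $T':=S\times S^1$, of dimension $k$, and take $\xi\in\operatorname{Lie}(T')$ with large fiber component and dense one-parameter subgroup, so that $\overline{\{\exp t\xi\}}=T'$; as $T'$ acts by isometries, this closure is the same in the full isometry group and the rank equals $k$.

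Finally I would count orbits. The $X_\xi$-orbit through $p$ is dense in $T'\cdot p$, hence closed iff $\dim\operatorname{Stab}_{T'}(p)=k-1$. Because $S^1$ acts freely on the fibers, the stabilizer of $p$ in $T$ is the graph of the fiber representation over the isotropy group $H_q$ of $q=\pi(p)$, so $T'\cap\operatorname{Stab}_T(p)$ has dimension $\dim(S\cap H_q)$; as $\dim S=k-1$ this equals $k-1$ exactly when $S\subseteq H_q$, i.e. exactly when $q\in\operatorname{Fix}(S)=\operatorname{Fix}(T^l)$. Over such a fixed point $T'\cdot p$ is the whole fiber circle, a single closed Reeb orbit, while over every other point the orbit is at least two-dimensional and the Reeb orbit is non-closed. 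This yields the asserted one-to-one correspondence, and the choices above are compatible because ``large fiber component'' and ``dense in $T'$'' are open/generic conditions that can be met simultaneously.
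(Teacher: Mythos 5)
Your proposal is correct and follows essentially the same route as the paper: lift the Hamiltonian $T^l$-action to $M$, pass to a subtorus $S$ with $\dim S=k-1$ and the same fixed set, rescale the connection form to $\alpha=\alpha_0/\alpha_0(X_\xi)$ for a generic $\xi\in\operatorname{Lie}(S\times S^1)$ with $\alpha_0(X_\xi)>0$, and build a $T$-invariant adapted metric (your explicit construction is exactly the content of Yamazaki's proposition that the paper cites). The only cosmetic differences are that you ensure positivity of $\alpha_0(X_\xi)$ by enlarging the fiber component rather than translating the moment map, and you identify the closed Reeb orbits via stabilizer dimensions rather than via the explicit lift formula $X_M=\Tilde{X_N}+\phi^X\frac{\partial}{\partial\theta}$; both are equivalent to the paper's argument.
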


In the Boothby-Wang fibration $\pi\colon M\to N$, for the regular contact form $\alpha$ with
$d\alpha = \pi^*(\omega)$, the quotient space by the Reeb flow is just the smooth manifold $N$. For each K-contact form in the conclusion of Theorem~\ref{thm3}, the closure of the Reeb flow is $T^k$, the quotient space by the Reeb flow is not a Hausdorff space, it is very singular --- it is locally the quotient of a Cartesian space by an affine action of a countable group; for a general description of such a quotient space, see \cite[Theorem 1.3]{LM}.

Theorem~\ref{thm3} is for Hamiltonian $T$ base manifold with general fixed point sets. 
It obviously applies to more general situations than the case to be mentioned below.
In the case that the fixed point set of the Hamiltonian $T$-action on $N$ consists of $n+1$ isolated points, by Theorem~\ref{thm3},  the $2n+1$-dimensional compact manifold $M$ admits  K-contact structures with exactly $n+1$ closed Reeb orbits. Similar to the number of fixed points for Hamiltonian $S^1$-actions on compact symplectic manifolds,  a compact K-contact manifold of dimension $2n+1$ has at least $n+1$ number of closed Reeb orbits, see \cite{R0} or \cite{GNT} for different proofs.

There is the claim by Rukimbira that compact  K-contact manifolds of dimension $2n+1$ with exactly $n+1$ closed Reeb orbits are finitely covered by $S^{2n+1}$, and homeomorphic to $S^{2n+1}$ if they are simply connected, see \cite{{R1}, {R2}} (also quoted in the book \cite[Theorem 7.4.7]{BG}). In \cite{GNT}, Goertsches, Nozawa and T\"oben constructed a $7$-dimensional simply connected counter example of the claim, the Stiefel manifold $V_2(\R^5)=SO(5)/SO(3)$. We observe that the manifold $V_2(\R^5)$ happens to be a principal circle bundle over the Grassmannian manifold of oriented 2-planes in $\R^5$. This fact motivates the  
author to use the current method to look at the problem. 
We use the current method to show that  Goertsches-Nozawa-T\"oben's example belongs to a  general family of counter examples, the Stiefel manifold $V_2(\R^{n+2})=SO(n+2)/SO(n)$, which is a principal circle bundle over the Grassmannian manifold of oriented 2-planes in $\R^{n+2}$ for any $n\geq 3$ odd, see Example~\ref{exgrass}. We will use Theorems~\ref{thm2} and \ref{thm3} to give more examples of compact simply connected K-contact manifolds of dimension $2n+1$ with exactly $n+1$ closed Reeb orbits which are not homeomorphic to $S^{2n+1}$. This is our Theorem~\ref{thm4} below.

\begin{theorem}\label{thm4}
 There are simply connected compact K-contact manifolds of dimension $2n+1$ with exactly $n+1$ isolated closed Reeb orbits which are not homeomorphic to $S^{2n+1}$, while having the real cohomlogy ring of $S^{2n+1}$. See Section 7 for the examples constructed.
\end{theorem}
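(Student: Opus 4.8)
The plan is to realize the desired manifolds as Boothby--Wang total spaces over the oriented real Grassmannian $N=\Gt_2(\R^{n+2})$ of oriented $2$-planes in $\R^{n+2}$, for $n\geq 3$ odd, and to identify the resulting total space with the Stiefel manifold $V_2(\R^{n+2})=SO(n+2)/SO(n)$. This $N$ is the smooth complex quadric $Q_n\subset\CP^{n+1}$ of complex dimension $n$; it is a simply connected K\"ahler manifold and a coadjoint orbit of $SO(n+2)$, so it carries a Hamiltonian action of the maximal torus $T=T^{(n+1)/2}$. I would first record the relevant geometry of $N$: since $n$ is odd, $H^2(N;\Z)\cong\Z$ is generated by the primitive hyperplane class $x$, and I normalize the K\"ahler form so that $[\omega]=x$. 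The tautological oriented $2$-plane bundle $E\to N$ has Euler class $\pm x$ and its unit circle bundle is exactly $V_2(\R^{n+2})$, so the principal circle bundle $\pi\colon M\to N$ with first Chern class $x=[\omega]$ is $M=V_2(\R^{n+2})$. This $M$ is simply connected because $V_2(\R^m)$ is $(m-3)$-connected and here $m-3=n-1\geq 2$ (equivalently, the primitivity of $x$ makes the boundary map $\pi_2(N)\to\pi_1(S^1)=\Z$ surjective).

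Next I would count the closed Reeb orbits. Since $n$ is odd, $\Gt_2(\R^{n+2})$ has the additive cohomology of $\CP^n$, so $\chi(N)=n+1$; concretely the $T$-fixed oriented $2$-planes are the $(n+1)/2$ coordinate $2$-planes, each taken with its two orientations, giving exactly $n+1$ isolated fixed points. As $(n+1)/2\leq n$, Theorem~\ref{thm3} applies and yields, for each $2\leq k\leq (n+3)/2$, a K-contact structure on $M$ of rank $k$ whose closed Reeb orbits are in bijection with these $n+1$ fixed points; since $N$ is K\"ahler these are Sasakian. Thus $M$ carries a K-contact structure with the minimal number $n+1$ of closed Reeb orbits.

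It then remains to pin down the cohomology, where the two coefficient rings must be played against each other. Over $\R$ the quadric satisfies $H^*(N;\R)=\R[x]/x^{n+1}$, i.e.\ $N$ has the real cohomology ring of $\CP^n$ (the nonvanishing of $x^n$ forces every $x^i$, $i\leq n$, to span the one-dimensional $H^{2i}(N;\R)$), so Theorem~\ref{thm1}(1) gives that $M$ has the real cohomology ring of $S^{2n+1}$. Over $\Z$, however, the degree of the quadric forces $x^n=2\,[\mathrm{pt}]^*$ in $H^{2n}(N;\Z)$, so the image of $\Z[x]$ in top degree is $2\Z\subsetneq\Z$ and hence $H^*(N;\Z)\neq\Z[x]/x^{n+1}$ as a ring; by Theorem~\ref{thm1}(3) the simply connected space $M$ is therefore not homeomorphic to $S^{2n+1}$. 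Equivalently, the Gysin sequence of the unit tangent bundle $S^n\to V_2(\R^{n+2})\to S^{n+1}$, whose Euler class is $\chi(S^{n+1})=2$ times a generator, shows $H^{n+1}(M;\Z)=\Z/2$, and this $2$-torsion already obstructs a homeomorphism to the sphere.

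I expect the main obstacle to be not any single computation but the \emph{simultaneous} verification that one symplectic manifold $N$ meets all the competing requirements: a Hamiltonian torus action with exactly $n+1$ isolated fixed points, the real cohomology ring of $\CP^n$, and an integral cohomology ring differing from that of $\CP^n$. The odd-dimensional complex quadric is precisely the space reconciling these, the factor of $2$ in $x^n=2\,[\mathrm{pt}]^*$ being exactly what survives integrally to break the sphere while dissolving over $\R$. Finally I would observe that the argument is purely formal once $N$ is found: any simply connected symplectic manifold of dimension $2n$ with a Hamiltonian torus action having $n+1$ isolated fixed points, with $H^*(N;\R)=\R[x]/x^{n+1}$ but $H^*(N;\Z)\neq\Z[x]/x^{n+1}$, produces a further example by the same steps, so the quadrics constitute one infinite family among others.
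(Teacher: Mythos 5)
Your proposal is correct and follows essentially the same route as the paper: it constructs the Boothby--Wang fibration $V_2(\R^{n+2})\to \Gt_2(\R^{n+2})$ for $n\geq 3$ odd, uses Theorem~\ref{thm1}(1) for the real cohomology of the total space, the failure of $H^*(N;\Z)=\Z[x]/x^{n+1}$ (equivalently the $\Z_2$ in $H^{n+1}(M;\Z)$) to rule out the sphere, and Theorem~\ref{thm3} applied to the Hamiltonian torus action with $n+1$ isolated fixed points to produce the K-contact structure with exactly $n+1$ closed Reeb orbits — exactly the paper's Example~\ref{exgrass}. The paper additionally records further families ($G_2/SU(2)$ and circle bundles over $V_5$, $V_{22}$), but your single family already establishes the theorem.
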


We hope that the observation of this paper gives a relatively full account of the problem involved, especially for the case of minimal isolated Reeb orbits for compact K-contact manifolds.
The current idea and method can provide a bridge and a view for understanding Hamiltonian dynamics on symplectic manifolds and on contact flows on contact manifolds. 
In this paper, we are mainly concerned with the closure of contact flows, hence we only consider Hamiltonian $T$-actions on the symplectic base manifold. In other contexts, one may consider more general Hamiltonian Lie group actions on the symplectic base manifold.
We hope that the idea and method and Theorem~\ref{thm3} will be used more broadly for symplectic base manifold $N$ with Hamiltonian Lie group actions with more fixed points --- isolated or non-isolated.

We mention here in \cite{L23}, for compact contact toric manifolds of Reeb type, a subclass of K-contact manifolds of maximal rank, in the regular contact case,  the author gives sufficient conditions for the manifolds to be homeomorphic to the spheres. In \cite{L23}, we use Boothby-Wang's claim that the quotient space of a regular contact manifold is a symplectic manifold. In this paper, we use Boothby-Wang's converse claim that a principal circle bundle over a symplectic manifold with a symplectic form representing an integral cohomology class is a contact manifold. The current results are for general K-contact manifolds.

Now we summarize the structure of the paper. In Section 2, we prove Theorem~\ref{thm1}.
In Section 3, we prove Theorem~\ref{thm2}. In Section 4, we recall the Boothby-Wang fibration, and state the  contact  and  K-contact structures on the total space. In Section 5, we lift the Hamiltonian torus action on the symplectic base manifold to the total space of the Boothby-Wang fibration. In Section 6, we perturb the K-contact structures on the total space of the Boothby-Wang fibration and prove Theorem~\ref{thm3}. In Section 7, we use Theorems~\ref{thm2} and \ref{thm3}  to construct the collection of examples of simply connected compact K-contact manifolds of dimension $2n+1$ with exactly $n+1$ closed Reeb orbits which are not homeomorphic to $S^{2n+1}$, but are real cohomology spheres, proving Theorem~\ref{thm4}. In Section 8, through the Boothby-Wang fibration, we give a family of examples of simply connected K-contact manifolds of dimension $2n+1$ which are not real cohomology spheres, and admit K-contact structures with more than $n+1$ number of closed Reeb orbits, using our Theorem~\ref{thm3}.
  
\subsubsection*{Acknowledgement}  I thank the anonymous referee for reading this paper and for making some comments which are helpful to improve the exposition.

\section{proof of Theorem~\ref{thm1}}
In this section, we give a proof  of  Theorem~\ref{thm1}. 
We first prove a lemma on the fundamental group of the total space of a principal circle bundle.
In this section, we only use Lemma~\ref{pi1m} (1), we will later use Lemma~\ref{pi1m} (2).

\begin{lemma}\label{pi1m}
Let $N$ be a connected compact manifold with a degree 2 primitive integral cohomology class $x$, and let $\pi\colon M\to N$ be a principal circle bundle over $N$ with Euler class $x$. Then the following claims hold.
\begin{enumerate}
\item If $\pi_1(N)=1$ and $H^2(N; \Z)=\Z$, then $\pi_1(M)=1$. 
\item If $N$ is symplectic with symplectic class $x$,  
$H^2(N; \Z)=\Z$, and $N$ admits a Hamiltonian circle action with  isolated fixed points, then $\pi_1(M)=1$.
\end{enumerate}
\end{lemma}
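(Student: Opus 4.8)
The natural tool is the long exact sequence of homotopy groups for the fibration $S^1 \hookrightarrow M \xrightarrow{\pi} N$. The relevant portion is
\[
\pi_2(N) \xrightarrow{\partial} \pi_1(S^1) \to \pi_1(M) \to \pi_1(N) \to \pi_0(S^1).
\]
Since $S^1$ is connected, $\pi_0(S^1)$ is trivial, so the map $\pi_1(M) \to \pi_1(N)$ is surjective. For part (1), the hypothesis $\pi_1(N)=1$ then shows $\pi_1(M)$ is a quotient of $\pi_1(S^1)=\Z$; concretely, $\pi_1(M)$ is the cokernel of the boundary map $\partial\colon \pi_2(N)\to \pi_1(S^1)=\Z$. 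So the whole question reduces to showing $\partial$ is surjective. The plan is to identify $\partial$ with the pairing of the Euler class $x$ against $\pi_2(N)$: under the Hurewicz isomorphism $\pi_2(N)\cong H_2(N;\Z)$ (valid because $\pi_1(N)=1$), the boundary map $\partial$ sends a class $\sigma$ to $\langle x, \sigma\rangle$, the evaluation of the Euler class on $\sigma$. This is the standard fact that the connecting homomorphism of a principal $S^1$-bundle is given by capping/evaluating with the Euler class.

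Once $\partial$ is identified with evaluation against $x$, surjectivity of $\partial$ is exactly the statement that $x\colon H_2(N;\Z)\to \Z$ is surjective as a homomorphism. Here is where the primitivity of $x$ enters. Because $\pi_1(N)=1$ and $H^2(N;\Z)=\Z$, the universal coefficient theorem gives that $H_2(N;\Z)$ is free (there is no torsion to worry about, as noted in the paper) and that the evaluation pairing $H^2(N;\Z)\times H_2(N;\Z)\to \Z$ is a perfect pairing. A generator of $H^2(N;\Z)=\Z$ is a primitive integral class, and under the identification $H^2(N;\Z)\hookrightarrow H^2(N;\R)$ the class $x$ is $\pm 1$ times this generator, hence itself a generator of $H^2(N;\Z)$. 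By the perfectness of the pairing, a generator of $H^2(N;\Z)$ evaluates surjectively onto $\Z$, so $\partial$ is surjective, its cokernel is trivial, and therefore $\pi_1(M)=1$. This completes part (1).

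For part (2), the hypothesis $\pi_1(N)=1$ is no longer assumed directly; instead $N$ is symplectic with symplectic class $x$ and carries a Hamiltonian circle action with isolated fixed points. The plan is first to establish $\pi_1(N)=1$ from these hypotheses and then reduce to part (1). The existence of a Hamiltonian circle action makes the moment map $\mu$ a perfect Morse--Bott function whose critical set is the fixed point set; with isolated fixed points this is an honest Morse function, and all of its Morse indices are even (the index at a fixed point is twice the number of negative weights of the isotropy representation). A Morse function with no critical points of index $1$ shows that $N$ admits a CW structure with no $1$-cells, which forces $\pi_1(N)=1$; equivalently $b_1(N)=0$ and, combined with the even-index structure, one concludes simple connectivity. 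Together with the assumption $H^2(N;\Z)=\Z$, we are now exactly in the setting of part (1), which yields $\pi_1(M)=1$.

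I expect the main obstacle to be the careful identification of the connecting homomorphism $\partial\colon \pi_2(N)\to \pi_1(S^1)$ with evaluation against the Euler class $x$, and in particular keeping the naturality/sign bookkeeping straight through the Hurewicz isomorphism so that surjectivity of $\partial$ is genuinely equivalent to $x$ being a generator of $H^2(N;\Z)$. The Morse-theoretic deduction of simple connectivity in part (2) is standard but must invoke that the indices are all even; the key input there is that the fixed points are isolated so that the moment map is a genuine Morse function rather than only Morse--Bott.
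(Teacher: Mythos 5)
Your proof is correct and follows essentially the same route as the paper: for (1) you identify the connecting map $\pi_2(N)\to\pi_1(S^1)$ in the homotopy exact sequence of the bundle with evaluation of the Euler class $x$ (via the Hurewicz isomorphism and the universal coefficient theorem), and use primitivity together with $H^2(N;\Z)=\Z$ to get surjectivity and hence $\pi_1(M)=1$; for (2) you establish $\pi_1(N)=1$ and reduce to (1), exactly as the paper does. The only divergence is that the paper obtains $\pi_1(N)=1$ by citing the result that $\pi_1(N)$ equals $\pi_1$ of the minimum of the moment map (a single point when fixed points are isolated), whereas you argue directly that the moment map is a Morse function with only even indices, giving a CW structure with no $1$-cells on the connected manifold $N$ --- a standard and correct substitute (though your aside ``equivalently $b_1(N)=0$'' is imprecise, since vanishing of $b_1$ alone would not give simple connectivity; the no-$1$-cells argument is what does the work).
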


\begin{proof}
(1) Consider the homotopy exact sequence for the circle bundle $\pi\colon M\to N$:
\begin{equation}\label{hmtp}
\cdots\to \pi_2(N)\to\pi_1(S^1)\to\pi_1(M)\to\pi_1(N)\to\pi_0(S^1)\to\cdots.
\end{equation}
Since $\pi_1(N) = 1$, we have $\pi_2(N)= H_2(N; \Z)$, and $H^2(N; \Z)=H_2(N; \Z)$ (by the universal coefficient theorem).
By considering the universal $S^1$-bundle $ES^1\to BS^1$, a classifying map $N\to BS^1$, and a diagram chasing, we see that the map 
$$\pi_2(N)= H_2(N; \Z) = H^2(N; \Z)=\Z\to\pi_1(S^1) = H_1(S^1;\Z) = H^1(S^1; \Z)=\Z$$
in (\ref{hmtp}) is multiplication by the Euler class $x\in H^2(N; \Z)=\Z$ of the $S^1$-bundle $\pi\colon M\to N$.
Since $x$ is primitive integral, (up to a sign) the above map is multiplication by $1$, so the map
$\pi_2(N)\to\pi_1(S^1)$ is surjective, by the exact sequence  (\ref{hmtp}), we get
$$\pi_1(M) = 1.$$

(2) If $N$ is a connected compact symplectic manifold which admits a Hamiltonian circle action with isolated fixed points, then the minimum of the moment map is a single point, by \cite[Theorem 0.1]{L0}, $\pi_1(N)=\pi_1(\mbox{minimum})=1$,  so the claim follows from (1).
\end{proof}

\begin{remark}
In the proof of Lemma~\ref{pi1m}, we need the map $\pi_2(N)\to\pi_1(S^1)$ to be surjective to claim
$\pi_1(M) = 1$. 
The condition $H^2(N; \Z)=\Z$ is one such that it holds. One may have other conditions for the claim
to hold. 
\end{remark}

\begin{proof}[Proof of Theorem~\ref{thm1}]
(1) Suppose we have the isomorphism $H^*(N; \R)=\R[x]/x^{n+1}$ as a ring. Then $H^{\mbox{odd}}(N; \R)=0$. So the Gysin long exact sequence for the circle bundle $\pi\colon M\to N$ in $\R$-coefficients splits into a short exact sequence
$$0\to H^{2i+1}(M; \R)\overset{\pi_*}\longrightarrow H^{2i}(N; \R)\overset{f}\longrightarrow H^{2i+2}(N; \R)\overset{\pi^*}\longrightarrow H^{2i+2}(M; \R)\to 0,$$
where $\pi_*$ is integration along the fiber, $f$ is multiplication by the Euler class $x$ of the bundle $\pi\colon M\to N$, and $\pi^*$ is pull back. The ring isomorphism $H^*(N; \R)=\R[x]/x^{n+1}$ implies that $f$ is an isomorphism for each $0\leq 2i\leq 2n-2$, and is $0$ when $2i=2n$, so $H^*(M; \R)\cong H^*(S^{2n+1}; \R)$ as rings.

Conversely, if $H^*(M; \R)\cong H^*(S^{2n+1}; \R)$ as rings, then $H^k(M; \R)=0$ for $1\leq k\leq 2n$ and $H^{2n+1}(M; \R)=\R$. The long Gysin exact sequence
$$\cdots \to H^{*+1}(M; \R)\to H^*(N; \R)\to H^{*+2}(N; \R)\to H^{*+2}(M; \R)\to \cdots$$
implies that $f\colon H^*(N; \R)\to H^{*+2}(N; \R)$ is an isomorphism for each $0\leq *\leq 2n-1$, and is $0$ when $ *=2n$, hence $H^{2i}(N; \R)=\R$ for all $0\leq 2i\leq 2n$, and 
$H^{\mbox{odd}}(N; \R)=0$, so we have the ring isomorphism 
$H^*(N; \R)=\R[x]/x^{n+1}\cong H^*(\CP^n; \R)$.

(2)  Consider the long Gysin exact sequence for the circle bundle $\pi\colon M\to N$ in $\Z$-coefficients: 
$$\cdots \to H^{*+1}(M; \Z)\to H^*(N; \Z)\to H^{*+2}(N; \Z)\to H^{*+2}(M; \Z)\to \cdots$$
and follow the arguments in (1). In the process of the arguments, we need to note that $x\in H^2(N; \Z)$ implies that $x^i$  is an integral class for each $2\leq i\leq n$, and 
$$H^{2i}(N; \Z)\overset{f}\longrightarrow H^{2i+2}(N; \Z)$$
which is multiplication by $x$, is an isomorphism if and only 
if $x^i$ and $x^{i+1}$ are generators for  the two corresponding cohomology groups, where 
$0\leq i\leq n-1$.

(3) If $M$ is homeomorphic to $S^{2n+1}$, then $\pi_1(M)=1$ and $H^*(M; \Z)\cong H^*(S^{2n+1}; \Z)$ as rings; by (\ref{hmtp}), $\pi_1(N)=1$, and by (2), $H^*(N; \Z)=\Z[x]/x^{n+1}$ as a ring.
Conversely, if $\pi_1(N)=1$ and $H^*(N; \Z)=\Z[x]/x^{n+1}$ as a ring, then by Lemma~\ref{pi1m} (1), $\pi_1(M)=1$, and by (2), $H^*(M; \Z)\cong H^*(S^{2n+1}; \Z)$ as rings.  
By the generalized Poincar\'e conjecture  (\cite{{KL}, {S}}), $M$ is homeomorphic to $S^{2n+1}$.
\end{proof}

\section{proof of Theorem~\ref{thm2}}

In this section, we first prove Theorem~\ref{thm2}, then we make a few very helpful remarks for understanding the theorem.

\begin{proof}[Proof of Theorem~\ref{thm2}]
 (1)  When the compact symplectic manifold $N$ admits a Hamiltonian $S^1$-action with only isolated fixed points, by \cite[Theorem 0.1]{L0}, $\pi_1(N) =1$. Then the claim follows from Theorem~\ref{thm1} (3).

(2) When $N$ admits a Hamiltonian $S^1$-action with fixed point set  consisting of $n+1$ isolated points, if $c_1(N)=(n+1)x$, then by \cite[Theorem 1]{L}, the largest weight of the $S^1$-action at all the fixed points on $N$ is
equal to the length of the moment map image interval (which is an integer since $[\omega]$ is an integral cohomology class).  In this case, by \cite[Theorem 2]{L}, 
$H^*(N; \Z)=\Z[x]/x^{n+1}$ holds as a ring, and we also have the total Chern class isomorphism
$c(N)\cong c(\CP^n) = (1+x)^{n+1}$. By (1),  $M$ is homeomorphic to $S^{2n+1}$.
\end{proof}

\begin{remark}
In Theorem~\ref{thm2}, since the fixed point set of the Hamiltonian circle action consists of exactly $n+1$ isolated points, 
using the fact that the moment map is a Morse function on $N$, we obtain the integral cohomology groups of $N$ as follows (see \cite{L}):
$$H^{2i}(N; \Z)=\Z, \,\,\,\mbox{and}\,\,\, H^{2i+1}(N; \Z)=0 \,\,\,\mbox{for all $0\leq i\leq n$}.$$
The fact that $x=[\omega]$ is primitive integral implies that $x^i$ is an integral class and $0\neq x^i\in H^{2i}(N; \Z)$ since $N$ is compact symplectic, where $0\leq i\leq n$. 
So there exist natural numbers $a_2, \cdots, a_n$ such that $0\neq \frac{1}{a_i}x^i$ is primitive integral for $2\leq i\leq n$. So the integral cohomology ring  $H^*(N; \Z)$ is freely generated by 
$$1, \,\,\, x, \,\,\, \frac{1}{a_2}x^2, \,\,\,\cdots, \,\,\, \frac{1}{a_{n-2}}x^{n-2}, \,\,\, \frac{1}{a_{n-1}}x^{n-1}, \,\,\, \frac{1}{a_n}x^n.$$
By Poincar\'e duality, $a_ia_{n-i}=a_n$ for all $1\leq i\leq n-1$, where we regard $a_1=1$.
If we do not have the ring isomorphism $H^*(N; \Z)\neq\Z[x]/x^{n+1}$, i.e., the generators of 
the groups $H^{2i}(N; \Z)=\Z$'s are not as
$$1, \,\, x, \,\, \cdots,  x^i, \, \, \cdots, \,\, x^n,$$ 
then there is the first $a_k$ such that $a_k\neq 1$, and by the Gysin exact sequence of the principal circle bundle in $\Z$-coefficients:
$$\cdots\to H^{2k-2}(N; \Z)\to H^{2k}(N; \Z)\to H^{2k}(M; \Z)\to H^{2k-1}(N; \Z)=0 \to \cdots,$$
we get
$$H^{2k}(M; \Z) = \Z_{a_k},$$
a torsion group. In this case $M$ cannot be homeomorphic to $S^{2n+1}$.
\end{remark}

\begin{remark}
In Theorem~\ref{thm2}, if we assume that the fixed point set of the Hamiltonian circle action on $N$ consists of isolated points but more than $n+1$ isolated points, then $N$ has more even Betti numbers than $\CP^n$ implied by the fact that the moment map is a perfect Morse function; in this case, $M$ does not even have the real cohomology ring of a sphere by Theorem~\ref{thm1} (1). See Example~\ref{evengrass}.
\end{remark}

\begin{remark}\label{niso} 
In Theorem~\ref{thm2}, if we do not assume that the fixed points of the Hamiltonian circle action on $N$ are isolated, we may still have the claims (1) and (2). For example, when the fixed point set of the circle action consists of two connected components $X$ and $Y$ with  $\dim(X)+\dim(Y)+2=\dim(N)$,  \cite{LT} shows that this condition is equivalent to the condition that the even Betti numbers of $N$ are minimal, i.e., $b_{2i}(N) =1$ for each $0\leq 2i\leq 2n$. In this case, by \cite{LT}, $N$ either has the integral cohomology ring of $\CP^n$ or it has the integral cohomology ring of the Grassmannian of oriented 2-planes in $\R^{n+2}$, denoted $\Gt_2(\R^{n+2})$ with $n\geq 3$ odd.  By \cite{LOS}, the manifold $N$ is simply connected. In this case, we still have the claims 
(1) and (2) of Theorem~\ref{thm2}, moreover, in (2), the fact that $M$ is homeomorphic to $S^{2n+1}$ implies $c(N)\cong c(\CP^n)$ by (1) and the results in \cite{LT}.
\end{remark}

\begin{remark}\label{tc}
In Theorem~\ref{thm2}, if there is a Hamiltonian torus action on $N$, since $N$ is compact, there is a finite number of stabilizer groups for the torus action, so we can find a subcircle action with exactly the same fixed point set as the torus action. Hence for the claims in Theorem~\ref{thm2}, assuming a torus action is a stronger assumption than assuming a circle action on $N$, or assuming a circle action on $N$ is more general. 
\end{remark}

\section{K-contact structures on the total spaces of principal circle bundles over symplectic manifolds}

Suppose $(M, \alpha)$ is a compact regular contact manifold. Then by Boothby and Wang \cite{BW},  $M$ is a principal circle bundle over the Reeb orbit space $N=M/S^1$, $\alpha$ is a connection form in this bundle, and the curvature form of $\alpha$ projects to a symplectic form $\omega$ on $N=M/S^1$ which represents an integral cocycle, i.e., $d\alpha = \pi^*(\omega)$, where $\pi\colon M\to N$ is the projection, and $[\omega]\in H^2(N; \Z)$.

Conversely, given a compact symplectic manifold $(N, \omega)$, with $[\omega]$ an integral cohomology class, Boothby and Wang also have a converse claim as follows, this is the part we will use in this paper.

\begin{theorem}(Boothby-Wang)\label{BW1}\cite{BW}
Let $(N, \omega)$ be a compact symplectic manifold with symplectic class $[\omega]$ being integral. Let 
$\pi\colon M\to N$ be a principal circle bundle over $N$ with first Chern class $[\omega]$, and $\alpha$ be a connection form on $M$ such that $d\alpha=\pi^*(\omega)$.
Then $(M, \alpha)$ is a contact manifold with regular contact form $\alpha$ whose Reeb flow generates  exactly the transformation group of the fibers of the principal circle bundle.
\end{theorem}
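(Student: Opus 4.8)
The plan is to read $\alpha$ as a genuine connection $1$-form for the principal circle bundle and then to verify directly the two assertions of the statement: that $\alpha\wedge(d\alpha)^n$ is nowhere zero, so $\alpha$ is a contact form on the $(2n+1)$-dimensional manifold $M$, and that the Reeb vector field of $\alpha$ coincides with the generator of the principal $S^1$-action. Let $\xi$ denote the fundamental vector field on $M$ generating the free principal circle action, i.e.\ the vertical vector field tangent to the fibers. Because $\alpha$ is a connection form, after the usual identification of the Lie algebra of $S^1$ with $\R$ it is a real $S^1$-invariant $1$-form with $\alpha(\xi)=1$, and its curvature $d\alpha$ is horizontal and invariant, which is exactly the content of the hypothesis $d\alpha=\pi^*(\omega)$.

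First I would establish the contact condition. Since $\pi^*$ is a ring homomorphism, $(d\alpha)^n=(\pi^*\omega)^n=\pi^*(\omega^n)$, and as $\omega$ is symplectic, $\omega^n$ is a volume form on $N$; hence $(d\alpha)^n$ is a nowhere-vanishing $2n$-form on $M$ that annihilates the vertical direction $\xi$ (because $d\pi(\xi)=0$). At each point of $M$ one splits $T_mM=\R\xi\oplus\ker\alpha$, where $d\pi$ restricts to an isomorphism $\ker\alpha\to T_{\pi(m)}N$; evaluating $\alpha\wedge(d\alpha)^n$ on $\xi$ together with a horizontal frame projecting to a frame on $N$ gives $\alpha(\xi)$ times the value of $\pi^*(\omega^n)$ on the horizontal part, which is nonzero. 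Thus $\alpha\wedge(d\alpha)^n\neq 0$ everywhere and $\alpha$ is a contact form.

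Next I would identify the Reeb field. Recall that $R$ is characterized by $\alpha(R)=1$ and $\iota_R\,d\alpha=0$, and that these two conditions determine $R$ uniquely. I claim $R=\xi$: indeed $\alpha(\xi)=1$ by the connection normalization, and for any vector field $Y$ one has $(d\alpha)(\xi,Y)=\pi^*(\omega)(\xi,Y)=\omega\big(d\pi(\xi),d\pi(Y)\big)=0$ since $\xi$ is vertical, so $\iota_\xi\,d\alpha=0$. By uniqueness, $R=\xi$. Consequently the Reeb flow of $\alpha$ is precisely the principal circle action, which is free; hence $\alpha$ is regular and its Reeb flow generates exactly the fiber transformation group, as claimed.

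The arguments are essentially routine once the setup is fixed, so the only real point demanding care is the first step, the nondegeneracy of $\alpha\wedge(d\alpha)^n$ transverse to the fibers; but this reduces cleanly to the nondegeneracy of $\omega$ on $N$ via the horizontal isomorphism $d\pi\colon\ker\alpha\to TN$, and the remaining work is bookkeeping about the normalization of the connection form (ensuring $\alpha(\xi)=1$ and that $d\alpha$ is genuinely basic, both of which are guaranteed by the hypotheses).
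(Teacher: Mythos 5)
Your argument is correct. The paper does not prove this statement at all --- it is quoted verbatim from Boothby--Wang \cite{BW} and used as a black box --- so there is no internal proof to compare against; your direct verification is the standard one and is essentially what appears in the original reference. The two key points are exactly the ones you isolate: $(d\alpha)^n=\pi^*(\omega^n)$ kills the vertical direction while being nondegenerate on the horizontal distribution $\ker\alpha\cong\pi^*TN$, giving $\alpha\wedge(d\alpha)^n\neq 0$; and the fundamental vector field $\xi$ of the principal action satisfies $\alpha(\xi)=1$ and $\iota_\xi d\alpha=\iota_\xi\pi^*\omega=0$, so by uniqueness it is the Reeb field, whence the Reeb flow is the (free) fiber action and $\alpha$ is regular. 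The only things you correctly flag as conventions rather than content are the identification $\mathrm{Lie}(S^1)\cong\R$ normalizing $\alpha(\xi)=1$, and the fact that the existence of a connection with $d\alpha=\pi^*(\omega)$ (equivalently, that $[\omega]$ represents the Euler class) is part of the hypothesis rather than something to be established here.
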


In the Boothby-Wang fibration in Theorem~\ref{BW1} with connection $\alpha$, suppose $g_N$ is any Riemannian metric on $N$ compatible with $\omega$, i.e., there is an almost complex structure $J_N$ on $N$ such that 
$g_N (X, Y)=\omega(X, J_N Y)$ for any $X, Y\in TN$, we define an almost complex structure $J_M$ on 
$\ker(\alpha) \subset TM$ by
$$J_M X = \big(J_N(\pi_* X)\big)_{\mbox{hor}},\quad \forall\,\, X\in \ker(\alpha),$$
where $\big(J_N(\pi_* X)\big)_{\mbox{hor}}$ is the horizontal lift of the vector field $J_N(\pi_* X)$.  
We have $\pi_*(J_M X) = J_N(\pi_* X)$.
Then $g=\pi^*(g_N)\oplus \alpha\otimes\alpha$ is a Riemannian metric on $M$ invariant under the Reeb flow of $\alpha$, and is adapted to $\alpha$,  so  $(M, \alpha, g)$ is a K-contact manifold. 
Note that we may have many different choices for $g_N$ and hence for $g$. 
Conversely, let $g$ be an adapted Riemannian metric on $M$ to $\alpha$. Let $L_R$ be the Lie derivative with respect to the Reeb vector field $R$ of $\alpha$. Since $L_Rg=0$,
we have $L_RJ_M=0$, where $J_M$ is a compatible almost complex structure on $\ker(\alpha)$ (see \cite[Lemma 3.2]{YK}), so both $g$ and $J_M$ project respectively to a metric $g_N$ and 
an almost complex structure $J_N$ on $N$ such that $g_N$, $J_N$ and $\omega$ are compatible.
So $g$ is of the form $g=\pi^*(g_N)\oplus \alpha\otimes\alpha$, if we require that the Reeb vector field has norm 1 at every point (this is what is usually required for a K-contact structure).

Now let $(P, \eta, h)$ be a general K-contact manifold (or a contact metric manifold), where $P$ is a manifold, $\eta$ is a contact one form, and $h$ is an adapted metric. 
In the text books (see e.g. \cite{YK}), the almost complex structure $J$ is defined on the whole $TP$  by letting $J$ on $\ker(\eta)$ as we defined and letting 
$$J \xi =0,$$ 
where $\xi$ is the Reeb vector field of $\eta$. For any $X\in TP$, we can write $X=X_1 + \eta(X) \xi$ for some $X_1\in\ker(\eta)$. Then
$JX=JX_1$ and $J^2 (X) = J^2 X_1 = -X_1 = -X + \eta(X) \xi$, this is often written as
$$J^2 = - \mbox{Id} + \eta \otimes \xi.$$
 Let the Nijenhuis tensor $N_J$ be
$$N_J (X, Y) = J^2[X, Y] + [JX, JY] - J [JX, Y]-J [X, JY], \,\,\,  \forall \,\,\, X, Y \in TP,$$ 
where $[\cdot, \cdot]$ is the Lie bracket of vector fields.
If 
$$N_J + 2d\eta\otimes \xi = 0,$$
then we call the K-contact manifold $(P, \eta, h)$ a {\bf Sasakian manifold}, and the structure $(\eta, h)$
a Sasakian structure on $P$ (e.g. see Chapter V in \cite{YK}). We may view Sasakian manifolds as the odd dimensional analog of K\"ahler manifolds.

For the Boothby-Wang fibration $\pi\colon M\to N$ and the K-contact structure $(\alpha, g)$ on $M$
we constructed above, we extend $J_M$ to $TM$ by letting $J_M R=0$, where $R$ is the Reeb vector field of $\alpha$. Then by Hatakeyama \cite{H} and Morimoto \cite{M}, $(M, \alpha, g)$ is a Sasakian  manifold if and only if $(N, \omega, g_N)$ is a K\"ahler manifold. May see p290-291 in \cite{YK}, where this is checked. We summarize all the facts above as follows.

\begin{theorem}\cite{{YK}, {H}, {M}}\label{BW2}
In the Boothby-Wang fibration $\pi\colon M\to N$ in Theorem~\ref{BW1} with connection $\alpha$ such that $d\alpha=\pi^*(\omega)$, any K-contact Riemannian metric on $M$ adapted to $\alpha$ is of the form $g=\pi^*(g_N)\oplus \alpha\otimes\alpha$, where $g_N$ is any Riemannian metric on $N$ compatible with $\omega$. So $(M, \alpha, g)$ is a K-contact manifold. Moreover, $(M, \alpha, g)$ is a Sasakian manifold if and only if $(N, \omega, g_N)$ is a K\"ahler manifold.
 \end{theorem}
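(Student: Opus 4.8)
The plan is to treat the three assertions in turn, concentrating on the Sasakian characterization. Throughout I write $R$ for the Reeb field of $\alpha$ and regard $\ker(\alpha)$ as the horizontal distribution of the connection $\alpha$; for a vector field $U$ on $N$ I write $\widetilde{U}\in\ker(\alpha)$ for its horizontal lift, so that $\pi_*\widetilde{U}=U$ and, by the definition of $J_M$, $J_M\widetilde{U}=\widetilde{J_N U}$.

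\emph{The form of an adapted metric, and the converse.} Let $g$ be a K-contact metric adapted to $\alpha$, normalized as usual so that $R$ has unit length and is $g$-orthogonal to $\ker(\alpha)$ (equivalently $g(R,\cdot)=\alpha(\cdot)$). Then $TM=\ker(\alpha)\oplus\R R$ is a $g$-orthogonal splitting and $g=g|_{\ker(\alpha)}\oplus\alpha\otimes\alpha$. Since the Reeb flow is isometric, $L_R g=0$, and since $\alpha$ is the connection form of the principal $S^1$-action generated by $R$ we have $L_R\alpha=0$; hence $g|_{\ker(\alpha)}$ is $S^1$-invariant and descends to a metric $g_N$ on $N$. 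By \cite[Lemma 3.2]{YK}, $L_R g=0$ forces $L_R J_M=0$, so $J_M$ descends to an almost complex structure $J_N$ on $N$; projecting the adapted identity $g(X,Y)=d\alpha(X,J_MY)$ and using $d\alpha=\pi^*(\omega)$ gives $g_N(\cdot,\cdot)=\omega(\cdot,J_N\cdot)$, i.e. $g_N$ is compatible with $\omega$. This proves that every adapted $g$ has the asserted form, and the converse construction (lift a compatible $J_N$ to $J_M$ on $\ker(\alpha)$ and set $g=\pi^*(g_N)\oplus\alpha\otimes\alpha$) is exactly the paragraph preceding the statement; one only verifies $R$-invariance and the adapted identity, again through $d\alpha=\pi^*(\omega)$.

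\emph{Sasakian $\Leftrightarrow$ K\"ahler.} Extend $J_M$ to $TM$ by $J_M R=0$, so $J_M^2=-\mbox{Id}+\alpha\otimes R$, and evaluate the tensor $N_{J_M}+2\,d\alpha\otimes R$ on the three types of argument pairs. For pairs containing $R$: horizontal lifts of vector fields on $N$ are $S^1$-invariant, hence commute with $R$, so all four bracket terms of $N_{J_M}(\widetilde{U},R)$ vanish, while $d\alpha(\widetilde{U},R)=0$ because $R$ lies in the kernel of $d\alpha$; thus the tensor vanishes automatically there. The substance is the horizontal--horizontal case, where I use the curvature identity
\[
[\widetilde{U},\widetilde{V}]=\widetilde{[U,V]}-d\alpha(\widetilde{U},\widetilde{V})\,R,\qquad d\alpha(\widetilde{U},\widetilde{V})=\omega(U,V),
\]
together with $J_M R=0$ and $J_M^2=-\mbox{Id}+\alpha\otimes R$. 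Each of the four terms of $N_{J_M}(\widetilde{U},\widetilde{V})$ then splits into a horizontal piece and a vertical ($R$-)piece. The horizontal pieces assemble, using $J_N^2=-\mbox{Id}$ on $N$, precisely into the horizontal lift $\widetilde{N_{J_N}(U,V)}$ of the base Nijenhuis tensor; the vertical pieces collect into a multiple of $\omega(U,V)\,R=d\alpha(\widetilde{U},\widetilde{V})\,R$, where the $J_N$-invariance $\omega(J_N U,J_N V)=\omega(U,V)$ of the compatible form is used. Adding $2\,d\alpha\otimes R$ is designed to cancel exactly these vertical terms, leaving $\widetilde{N_{J_N}(U,V)}$. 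Hence $N_{J_M}+2\,d\alpha\otimes R=0$ holds if and only if $N_{J_N}=0$, i.e. $J_N$ is integrable; since $\omega$ is already closed and $g_N$-compatible, this is precisely the statement that $(N,\omega,g_N)$ is K\"ahler.

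\emph{Main obstacle.} The only real work is the horizontal--horizontal Nijenhuis computation: one must carefully separate the horizontal and vertical components of all four bracket terms and check that the vertical contributions cancel against $2\,d\alpha\otimes R$. This cancellation is sensitive to the sign and normalization conventions for $d\alpha$, for the curvature of the connection, and for the coefficient in the Sasakian condition, so I would fix these at the outset (for instance the factor in $d\alpha(X,Y)=\frac{1}{2}\bigl(X\alpha(Y)-Y\alpha(X)-\alpha([X,Y])\bigr)$) to make the cancellation exact. Since this identity is classical, due to Hatakeyama \cite{H} and Morimoto \cite{M} (see also \cite{YK}), one may alternatively quote it directly rather than reproduce the calculation.
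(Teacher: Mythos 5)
Your proposal is correct and follows essentially the same route as the paper: the identification of every adapted metric with $\pi^*(g_N)\oplus\alpha\otimes\alpha$ via $L_Rg=0\Rightarrow L_RJ_M=0$ and projection to $N$ is exactly the paper's argument, and the Sasakian--K\"ahler equivalence is the classical Hatakeyama--Morimoto result, which the paper simply cites (pointing to p.~290--291 of Yano--Kon) while you additionally sketch the horizontal/vertical Nijenhuis computation behind it. Your caveat about the $\tfrac{1}{2}$-normalization of $d\alpha$ is well placed: with the convention $d\alpha(X,Y)=\tfrac{1}{2}\bigl(X\alpha(Y)-Y\alpha(X)-\alpha([X,Y])\bigr)$ the vertical terms cancel against $2\,d\alpha\otimes R$ exactly, whereas without the factor $\tfrac{1}{2}$ a residual $d\alpha(\widetilde{U},\widetilde{V})R$ survives, so fixing the convention is genuinely necessary rather than cosmetic.
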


\section{lift a Hamiltonian torus action on the symplectic base to the total space of the fibration $\pi\colon M\to N$}

In this section, assuming that there is a Hamiltonian torus $T$-action on the compact base symplectic manifold $(N, \omega)$, where $[\omega]$ is an integral class, we show how to lift the $T$-action to the total space $M$ of the principal circle bundle $\pi\colon M\to N$, so there are more symmetries on $M$ than the principal circle action. 

\medskip

Recall that if $(M, \alpha)$ is a contact manifold with a compact Lie group $G$-action, then there exists a {\bf contact moment map}  $\psi\colon M \to \mathfrak g^*$ defined by
\begin{equation}\label{contactmoment}
\psi^X=\langle\psi, X\rangle = \alpha(X_M), \quad\forall \,\, X\in \mathfrak g=\mbox{Lie}(G),
\end{equation}
where $X_M$ is the vector field generated by the $X$-action on $M$.

Let $(N, \omega)$ be a compact symplectic manifold with integral symplectic class $[\omega]$. Let $\pi\colon M\to N$ be  a principal circle bundle with connection form $\alpha$ such that $d\alpha=\pi^*(\omega)$, so $(M, \alpha)$ becomes a compact contact manifold. For the symplectic manifold $(N, \omega)$, we assume that there is a Hamiltonian Lie group $G$-action  with moment map $\phi\colon N\to \mathfrak g^*$ satisfying
$$d\phi^X = d\langle\phi, X\rangle = - i_{X_N}\omega,   \quad\forall \,\, X\in \mathfrak g,$$
where $X_N$ is the vector field generated by the $X$-action on $N$.
It is known that the Lie algebra $\mathfrak g=\,$Lie$(G)$-action on $N$ lifts to the total space $M$ by the formula
\begin{equation}\label{lift}
 X_M = \Tilde {X_{N}} + \phi^X\frac{\partial}{\partial \theta},\quad \forall\,\, X\in\mathfrak g,
\end{equation}
where $\Tilde {X_{N}}$ is the horizontal lift of $X_N$ to $M$, and $\frac{\partial}{\partial \theta}$ is the vector field generated by the principal circle action on $M$, it is also the Reeb vector field of the contact 1-form $\alpha$. For this lift, one may refer to Chapter 6 of the book by Ginzburg, Guillemin and Karshon \cite{GGK} or the works
\cite{{Ki1}, {Ki2}, {Ko}} by Kostant and Kirillov.
We can check that $X_M$ commutes with $\frac{\partial}{\partial \theta}$ for all $X\in\mathfrak g$, and (\ref{lift}) gives a Lie algebra action of $\mathfrak g$ on $M$, i.e., 
$$[X_M, Y_M] = \Tilde{[X, Y]_N} + \phi^{[X, Y]}\frac{\partial}{\partial \theta},\quad \forall\,\, X, Y\in\mathfrak g.$$
Note that since $\alpha$ is the connection form, also the contact form on $M$, by (\ref{contactmoment}), the lifted Lie algebra action on $M$ produces a  contact moment map 
$$\psi^X = \alpha(X_M) = \phi^X,  \quad \forall\,\, X\in\mathfrak g.$$
This can be expected, since the contact moment map defined by the contact form in (\ref{contactmoment}) is the same as the moment map for the presymplectic form $\pi^*(\omega)$ on the presymplectic manifold $(M, \pi^*(\omega))$. 

The lift (\ref{lift}) uses the connection form to define the horizontal lift $\Tilde {X_{N}}$.
Two different connections $\alpha$ and $\alpha'$ such that $d\alpha=\pi^*(\omega)$ and $d\alpha' =\pi^*(\omega)$ differ by the pull back by $\pi^*$ of an invariant closed 1-form on $N$.
 In certain situations, the lift does not depend on the choice of the connection, it is uniquely determined by the symplectic form $\omega$  and the moment map $\phi$ on $N$, for example, when a maximal torus of $G$ has a fixed point on $N$, or when $N$ is compact and $H^1(N, \R)=0$. The reader may check this, or refer to page 101 in \cite{GGK}.  We do not use this point in this paper, we just need to choose a connection $\alpha$ such that $d\alpha=\pi^*(\omega)$.

In this paper, our concern is the closures of the contact flows, hence we are only concerned about  torus actions on $N$. We now assume that there is an effective Hamiltonian torus $T$ action on the compact base symplectic manifold $N$. It is known that $\dim(T)\leq \frac{1}{2}\dim(N)$.
Since $N$ is compact, the fixed point set of the Hamiltonian $T$-action on $N$ is not empty. The moment map $\phi$ of the $T$-action on $N$ is defined up to translation in $\mathfrak t^*$ by a constant, so we may assume that  at a fixed point of $T$, $\phi$ achieves a value in the integral lattice of $\mathfrak t^*$. Since $[\omega]$ is an integral class, then all the fixed points of $T$ achieve moment map values in the integral lattice of  $\mathfrak t^*$. By Example 6.10 in \cite{GGK}, the Lie algebra  $\mathfrak t$-action on $M$ determines a Lie group $T$-action on $M$. Hence we lifted the Hamiltonian $T$-action on the base manifold $N$ to the total space $M$ of the principal circle bundle $\pi\colon M\to N$. Now we have a $T\times S^1$-action on $M$, where the $S^1$-action  is the principal circle action on $M$ rotating the fibers of the bundle $\pi\colon M\to N$.

Let $\psi\colon M\to\mathfrak t^*\times\R^*$ be the contact moment map of the $T\times S^1$-action.
Then the image of $\psi$ lies in a hyperplane in $\mathfrak t^*\times\R^*$ given by
$$\psi^{\theta}=\langle \psi, \frac{\partial}{\partial\theta}\rangle= \alpha\Big(\frac{\partial}{\partial\theta}\Big)=1,$$ 
where we are using $\frac{\partial}{\partial\theta}$ both for the Reeb vector field of $\alpha$, the vector field generated by the principal $S^1$-action, and for the Lie algebra element in $\R=\,$Lie$(S^1)$. Up to a translation by a constant, this moment map image can be identified with the moment map image of the $T$-action on $N$, hence it is a convex polytope.

Recall that by Theorem~\ref{BW2}, we have a K-contact structure $(\alpha, g)$ on $M$, 
where $g = \pi^*(g_N)+ \alpha\otimes\alpha$, $g_N$ being a Riemannian metric on $N$.
We can check that for the lifted vector field $X_M$ in (\ref{lift}), we have
$$L_{X_M}\alpha = 0, \quad \forall \,\, X\in\mathfrak t,$$ 
so $\alpha$ is $T$-invariant, hence is $T\times S^1$-invariant. We can choose $g_N$ on $N$ to be $T$-invariant. Then $g$ on $M$ is $T\times S^1$-invariant. Hence the abelian Lie group $T\times S^1$ is in the isometry group of the K-contact manifold 
$(M, \alpha, g)$.

\smallskip

We summarize the main points of this section as follows.
\begin{theorem}\label{thmlift}
Let $(N, \omega)$ be a compact symplectic manifold of dimension $2n$ with symplectic class 
$[\omega]$ being integral, and $\pi\colon M\to N$ be  a principal circle bundle with connection form $\alpha$ such that $d\alpha=\pi^*(\omega)$. If there is a Hamiltonian torus $T$-action on $N$ with $1\leq \dim(T)\leq n$, then the $T$-action can be lifted to a $T$-action on $M$, and
the K-contact structure $(\alpha, g)$ on $M$ in Theorem~\ref{BW2} can be made to be $T\times S^1$-invariant, where $S^1$ is the principal $S^1$-action on $M$.
\end{theorem}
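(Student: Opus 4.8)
The plan is to build the lifted action from the explicit infinitesimal formula and then integrate it, proceeding in four steps. First I would set, for each $X$ in the Lie algebra $\mathfrak t$ of $T$, the vector field $X_M = \Tilde{X_N} + \phi^X\frac{\partial}{\partial\theta}$ on $M$, where $\Tilde{X_N}$ is the horizontal lift determined by the connection $\alpha$, $\frac{\partial}{\partial\theta}$ generates the principal circle, and $\phi^X=\langle\phi,X\rangle$ is pulled back from $N$; then verify that $X\mapsto X_M$ is a Lie algebra homomorphism. Second I would integrate this infinitesimal $\mathfrak t$-action to a genuine action of the torus $T$. Third I would show $\alpha$ is invariant under the resulting $T\times S^1$-action, and fourth that the adapted metric $g$ can be arranged to be invariant too.

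For the first step, since $T$ is abelian it suffices to check $[X_M,Y_M]=0$ and that each $X_M$ commutes with $\frac{\partial}{\partial\theta}$. The bracket $[\Tilde{X_N},\Tilde{Y_N}]$ equals the horizontal lift of $[X_N,Y_N]=0$ corrected by a vertical term that is a multiple of the curvature value $\omega(X_N,Y_N)\frac{\partial}{\partial\theta}$, and this vanishes because $\omega(X_N,Y_N)=\{\phi^X,\phi^Y\}=0$ for a torus. The remaining cross terms are governed by $\Tilde{X_N}\phi^Y = d\phi^Y(X_N)=\omega(X_N,Y_N)=0$, by $\frac{\partial}{\partial\theta}\phi^X=0$ since moment map components are constant along fibers, and by the $S^1$-invariance of the horizontal distribution, which gives $[\frac{\partial}{\partial\theta},\Tilde{X_N}]=0$. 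Altogether $[X_M,Y_M]=0$ and $X_M$ commutes with the fiber generator.

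The second step is the \emph{main obstacle}, and it is where the integrality hypotheses are essential. The infinitesimal $\mathfrak t$-action always integrates to an action of the universal cover $\R^{\dim T}$; to descend to $T=\R^{\dim T}/\Lambda$, with $\Lambda$ the integral lattice of $\mathfrak t$, I must check that the flow of $X_M$ closes up with the correct period for every $X\in\Lambda$. The only obstruction is the rotation of the fiber accumulated as a closed $T$-orbit on $N$ is traversed; at a $T$-fixed point $p$ one has $\Tilde{X_N}=0$, so $X_M=\phi^X(p)\frac{\partial}{\partial\theta}$ is pure fiber rotation at speed $\phi^X(p)$, which closes up at the period prescribed by $\Lambda$ precisely when $\phi^X(p)\in\Z$ for all $X\in\Lambda$. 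After normalizing $\phi$ so that its values at the $T$-fixed points lie in the integral lattice of $\mathfrak t^*$ — possible because $[\omega]$ is integral — this holds, and the periodicity criterion of Example 6.10 in \cite{GGK} lets me conclude that the $\mathfrak t$-action integrates to a $T$-action on $M$. Together with the principal circle this produces the $T\times S^1$-action.

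Finally, for the third step I would apply Cartan's formula $L_{X_M}\alpha=d\,\alpha(X_M)+i_{X_M}d\alpha$: since $\alpha(X_M)=\phi^X$ and $i_{X_M}d\alpha=i_{\Tilde{X_N}}\pi^*\omega=\pi^*(i_{X_N}\omega)=-d(\pi^*\phi^X)$, the two contributions cancel, so $\alpha$ is $T$-invariant; as a connection form it is automatically invariant under the principal $S^1$, hence $T\times S^1$-invariant. For the fourth step I would average an arbitrary $\omega$-compatible metric over the compact torus $T$ to obtain a $T$-invariant $g_N$ still compatible with $\omega$, and then observe that $g=\pi^*(g_N)+\alpha\otimes\alpha$ is assembled from $T\times S^1$-invariant pieces and is therefore itself invariant. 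Invoking Theorem~\ref{BW2} identifies $(M,\alpha,g)$ as the desired K-contact manifold with the claimed symmetry, completing the argument.
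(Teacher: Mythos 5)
Your proposal is correct and follows essentially the same route as the paper: the infinitesimal lift $X_M=\Tilde{X_N}+\phi^X\frac{\partial}{\partial\theta}$, integration to a $T$-action via the integrality of the moment-map values at fixed points (citing Example 6.10 of \cite{GGK}, as the paper does), the Cartan-formula check that $L_{X_M}\alpha=0$, and averaging to get a $T$-invariant compatible $g_N$. You simply spell out in more detail the bracket and invariance computations that the paper leaves to the reader or to the cited references.
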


\section{changing the K-contact structures on the total space of the Boothby-Wang fibration $\pi\colon M\to N$ --- proof of Theorem~\ref{thm3}}

Let $(N, \omega)$ be a compact symplectic manifold with an integral symplectic class $[\omega]$.
By Theorem~\ref{BW2}, the total space of the principal circle bundle 
$\pi\colon M\to N$ with connection $\alpha$ such that $d\alpha=\pi^*(\omega)$ has  regular K-contact structures $(M, \alpha, g)$. By Theorem~\ref{thmlift}, if there is a Hamiltonian torus $T$-action on $N$, then the $T$-action can be lifted  to a $T$-action on $M$ such that the structure  $(\alpha, g)$ are $T\times S^1$-invariant. In this section, we show that we can perturb the contact form $\alpha$ to a new contact form $\alpha'$, and there exists a K-contact structure $(\alpha', g')$
 on $M$ such that the contact flow of $\alpha'$ has closure $T\times S^1$ in the isometry group of 
$(M, g')$. Moreover, we show that the closed Reeb orbits of $\alpha'$ are in one to one correspondence with the fixed points of the $T$-action on $N$. If $(N, \omega)$ is a K\"ahler manifold, then the K-contact structure $(\alpha', g')$ on $M$ is a Sasakian structure. Using this, we will give the proof of Theorem~\ref{thm3}.

\smallskip

First we have the following observation given by Yamazaki. 
\begin{proposition}\cite{Y}\label{Yama}
Let $(M, \alpha)$ be a compact $2n+1$-dimensional contact manifold, and $\varphi_t$ be the contact flow of $\alpha$. If there exists a torus $T$ with 
$1\leq\dim(T)\leq n+1$, a smooth effective $T$-action on $M$, and a homomorphism 
$h\colon \R\to T$ with dense image such that $\varphi_t = T\circ(h(t))$, then there exists an adapted Riemannian metric $g$ to $\alpha$ such that $(M, \alpha, g)$ is a K-contact manifold.
\end{proposition}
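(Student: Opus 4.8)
The plan is to build a single $T$-invariant adapted metric and then exploit the inclusion of the Reeb flow in $T$ to conclude that this metric is preserved by the contact flow. First I would record that the Reeb flow $\varphi_t$ automatically preserves $\alpha$, since $L_R\alpha = i_R\,d\alpha + d(\alpha(R)) = 0 + d(1) = 0$ for the Reeb vector field $R$. Because $\varphi_t = T\circ(h(t))$ is the action of $h(t)\in T$, each $h(t)$ preserves $\alpha$; as $h(\R)$ is dense in $T$ and the $T$-action on $M$ is smooth, the closed subgroup $\{a\in T : a^*\alpha=\alpha\}$ contains $h(\R)$ and hence equals $T$. Thus the whole torus $T$ preserves $\alpha$, and therefore also preserves the contact distribution $\ker(\alpha)$, the form $d\alpha$, and---since $R$ is uniquely characterized by $\alpha(R)=1$ and $i_R\,d\alpha=0$---the Reeb vector field $R$ itself. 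This is precisely the step that uses the density hypothesis on $h$: it upgrades the $h(\R)$-invariance coming from the Reeb flow to genuine $T$-invariance, replacing the non-compact one-parameter group by its compact closure $T$.

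Next I would produce $T$-invariant compatible data on the symplectic vector bundle $(\ker(\alpha),\, d\alpha|_{\ker(\alpha)})$. Averaging an arbitrary fibre metric over the compact group $T$ gives a $T$-invariant metric $h$ on $\ker(\alpha)$. Applying the fibrewise polar decomposition to the pair $(d\alpha|_{\ker(\alpha)}, h)$---writing $d\alpha(X,Y)=h(AX,Y)$, $P=\sqrt{-A^2}$, $J=P^{-1}A$---yields an almost complex structure $J$ on $\ker(\alpha)$ together with a compatible metric $g_V(X,Y)=d\alpha(X,JY)$. Since this construction is natural in its inputs and both $d\alpha$ and $h$ are $T$-invariant, the resulting $J$ and $g_V$ are $T$-invariant as well. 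I expect this to be the main technical point: ensuring that a $d\alpha$-compatible almost complex structure can be chosen $T$-invariantly. The naturality of the polar decomposition delivers this at once; equivalently, one may invoke the nonemptiness and contractibility of the space of $d\alpha$-compatible almost complex structures together with averaging over $T$.

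Finally I would assemble the metric on all of $TM$. Using the $T$-invariant splitting $TM = \ker(\alpha)\oplus \langle R\rangle$, I set $g = g_V \oplus \alpha\otimes\alpha$, so that $g$ restricts to $g_V$ on $\ker(\alpha)$, makes $R$ a unit vector orthogonal to $\ker(\alpha)$, and is therefore positive definite. By construction $g|_{\ker(\alpha)}$, $J$ and $d\alpha$ are compatible, and $g$ is $T$-invariant because each of $g_V$, $\alpha$, and the splitting is. In particular $g$ is invariant under the Reeb flow $\varphi_t = h(t)\in T$, so $g$ is adapted to $\alpha$ and $(M,\alpha,g)$ is a K-contact manifold, as required.
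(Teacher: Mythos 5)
Your proposal is correct and follows essentially the same route as the paper's own sketch: use density of $h(\R)$ in $T$ to upgrade Reeb-invariance of $\alpha$ to $T$-invariance, choose a $T$-invariant compatible pair $(J, g_V)$ on $\ker(\alpha)$ (you make explicit, via averaging and polar decomposition, what the paper simply asserts can be done), and set $g = g_V + \alpha\otimes\alpha$. The only quibble is the notational clash between the homomorphism $h\colon\R\to T$ and the averaged fibre metric $h$; otherwise the argument is complete.
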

The idea of the proof of Proposition~\ref{Yama} is summarized below. Since the Reeb flow preserves $\alpha$, $\varphi_t = T\circ(h(t))$, and $h(t)$ is dense in $T$, the $T$-action preserves $\alpha$. Then the $T$-action preserves $d\alpha$, and in the subbundle $\ker (\alpha)$, we can choose $T$-invariant Riemannian metric $g_0$ and $T$-invariant almost complex structure $J$ such that 
$(g_0, J, d\alpha)$ are compatible. Then we have a $T$-invariant Riemannian metric $g=g_0+\alpha\otimes\alpha$ on $M$. In particular, the Reeb vector field is Killing for $g$. So $(M, \alpha, g)$ is a K-contact manifold.

\begin{proposition}\label{pert}
Let $(N, \omega)$ be a compact symplectic manifold  of dimension $2n$ with symplectic class 
$[\omega]$ being integral, and  $\pi\colon M\to N$ be a principal circle bundle with connection form $\alpha$ such that $d\alpha=\pi^*(\omega)$. Assume $(N, \omega)$ is equipped with a Hamiltonian torus $T$ action with $1\leq\dim(T)\leq n$. Then the following claims hold.
\begin{enumerate}
\item There exists a K-contact structure $(\alpha', g')$ on $M$ such that the closure of the Reeb flow of $\alpha'$ in the isometry group of $(M, g')$ is $T\times S^1$, where $S^1$ is the principal $S^1$-transformation group on $M$. The closed Reeb orbits of $\alpha'$ are in one to one correspondence with the fixed points of the $T$-action on $N$.
\item If $(N, \omega)$ is a K\"ahler manifold, then the K-contact structure in $(1)$ is Sasakian.
\end{enumerate} 
\end{proposition}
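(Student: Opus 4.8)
The plan is to obtain $\alpha'$ not by an arbitrary perturbation but by a positive rescaling of $\alpha$ chosen so that its Reeb vector field becomes a generic element $\xi_M$ of the lifted torus action, whose flow is an irrational line dense in $T\times S^1$. Throughout I use the lifted $T\times S^1$-action of Theorem~\ref{thmlift}, its contact moment map $\psi$ with $\psi^X=\alpha(X_M)$ from \eqref{contactmoment}, and the invariance $L_{X_M}\alpha=0$, which via Cartan's formula gives the key identity $i_{X_M}d\alpha=-d\psi^X$ for every $X\in\mathfrak t\oplus\R=\mathrm{Lie}(T\times S^1)$.

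First I would select the deforming element. Let $\mathcal C=\{\xi\in\mathfrak t\oplus\R:\psi^\xi=\alpha(\xi_M)>0 \text{ on } M\}$. Since $\psi^{\partial/\partial\theta}=1$ and $M$ is compact, $\mathcal C$ is an open cone containing $\partial/\partial\theta$ (for $\xi=\partial/\partial\theta+\epsilon\zeta$ one has $\psi^\xi=1+\epsilon\phi^\zeta>0$ for small $\epsilon$). The elements of $\mathfrak t\oplus\R$ with coordinates rationally independent relative to the integral lattice generate a dense one-parameter subgroup of $T\times S^1$ and form a dense set, so I may fix $\xi\in\mathcal C$ with $\overline{\{\exp(t\xi)\}}=T\times S^1$. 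Setting $\alpha'=\alpha/\psi^\xi$ (a positive rescaling, hence again a contact form with $\ker\alpha'=\ker\alpha$), one checks using $i_{\xi_M}d\alpha=-d\psi^\xi$ and $\xi_M(\psi^\xi)=0$ that $\alpha'(\xi_M)=1$ and $i_{\xi_M}d\alpha'=0$, so the Reeb field of $\alpha'$ is $\xi_M$, with flow $\exp(t\xi)$. As $\dim(T\times S^1)=l+1\le n+1$ and the action is effective, Proposition~\ref{Yama} applied to $T\times S^1$ and $h(t)=\exp(t\xi)$ supplies an adapted metric $g'$ making $(M,\alpha',g')$ K-contact with Reeb-flow closure $T\times S^1$.

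Next I would identify the closed Reeb orbits. Because $\overline{\{\exp(t\xi)\}}=T\times S^1$, the closure of the Reeb orbit through $m$ is the full orbit $(T\times S^1)\cdot m$, which is closed (a circle) exactly when it is one-dimensional. By \eqref{lift} each generator $X_M=\widetilde{X_N}+\phi^X\,\partial/\partial\theta$ splits into horizontal and vertical parts, so the orbit tangent space at $m$ is the sum of the horizontal subspace $\{\widetilde{X_N}(m):X\in\mathfrak t\}$, of dimension $\dim(T\cdot\pi(m))$, and the vertical line $\R\,\partial/\partial\theta$; hence the orbit is one-dimensional iff $\pi(m)$ is fixed by $T$. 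At a fixed point $p$ one has $X_N=0$, so $X_M=\phi^X(p)\,\partial/\partial\theta$ is tangent to the fiber and $\pi^{-1}(p)\cong S^1$ is a single closed Reeb orbit, with distinct fixed points giving disjoint fibers. This produces the bijection between closed Reeb orbits of $\alpha'$ and fixed points of $T$, completing (1).

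For (2) I would take the compatible Kähler data $(g_N,J_N)$ on $N$ to be $T$-invariant, so that Theorem~\ref{BW2} makes the undeformed $(M,\alpha,g)$ Sasakian with $T\times S^1$ acting by automorphisms. The rescaling $\alpha\mapsto\alpha'=\alpha/\psi^\xi$ deforms only the Reeb field within $\mathcal C$ and leaves the transverse holomorphic structure fixed; taking $g'$ to be the metric built from this deformed transverse Kähler structure rather than an arbitrary output of Proposition~\ref{Yama}, the standard deformation theory of Sasakian structures keeps $(M,\alpha',g')$ Sasakian. I expect the main obstacle to sit here: one must verify that the normality (integrability) condition is preserved under the Reeb deformation --- it no longer suffices, as in (1), merely to exhibit some adapted metric --- and one must ensure the Kähler data on $N$ can be chosen $T$-invariant, so that the lifted action preserves the entire Sasakian structure and not only $\alpha$ and $g$. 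By contrast the choice of $\xi$ is routine, requiring only that the open cone $\mathcal C$, being a neighborhood of $\partial/\partial\theta$, meet the dense set of generators of $T\times S^1$.
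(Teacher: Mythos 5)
Your proposal follows essentially the same route as the paper: pick a generic $\xi\in\mathrm{Lie}(T\times S^1)$ with $\alpha(\xi_M)>0$ (available because the contact moment image is compact and lies in the hyperplane $\psi^{\theta}=1$), rescale to $\alpha'=\alpha/\alpha(\xi_M)$ so that $\xi_M$ becomes the Reeb field, invoke Proposition~\ref{Yama} for the adapted metric, identify the closed Reeb orbits with the one-dimensional $T\times S^1$-orbits (equivalently the fibers over $T$-fixed points) via the lift formula \eqref{lift}, and for (2) deform the $T\times S^1$-invariant Sasakian structure by the commuting Killing field $\xi_M$ --- the ``standard deformation theory'' you appeal to is exactly what the paper cites as \cite[Theorem 7.2]{YK}. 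The argument is correct.
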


\begin{proof}
(1)
By Theorem~\ref{thmlift},  we have a $T\times S^1$-action on $M$ preserving $\alpha$, and the $1\times S^1$-action generates the Reeb vector field $\frac{\partial}{\partial\theta}$ of $\alpha$.
The contact moment map $\psi\colon M\to\mathfrak t^*\times \R^*$ of the $T\times S^1$-action on $M$ has image, which is a convex polytope, lying in the hyperplane in $\mathfrak t^*\times \R^*$ away from the origin given by
\begin{equation}\label{hy}
\psi^{\theta}=\langle \psi, \frac{\partial}{\partial\theta}\rangle= \alpha\Big(\frac{\partial}{\partial\theta}\Big)=1.
\end{equation}
Here we are using $\frac{\partial}{\partial\theta}$ both for the vector field on $M$ and for the Lie algebra element in $\R=\,$Lie$(S^1)$.
Since (\ref{hy}) holds and the image of $\psi$ is compact, we can find a generic element $\xi\in\mathfrak t\times\R=\,$Lie$(T\times S^1)$, an element whose one parameter subgroup $\exp(t\xi)$ is dense in $T\times S^1$, such that
$$\psi^{\xi} =\langle \psi, \xi\rangle = \alpha(\xi_M) > 0,$$
where $\xi_M$ is the vector field on $M$ generated by the $\exp(t\xi)$-action.
Let
$$\alpha' = \frac{\alpha}{\alpha(\xi_M)}.$$
Then $\alpha'$ is a new contact form on $M$ whose Reeb vector field  is $\xi_M$. The contact manifold $(M, \alpha')$ and the torus
$T\times S^1$ satisfy the condition of Proposition~\ref{Yama}, where $\dim(T\times S^1)\leq n+1$. By Proposition~\ref{Yama}, there exists a K-contact structure $(\alpha', g')$ on $M$ such that the closure of the Reeb flow of $\alpha'$ in the isometry group of $(M, g')$ is $T\times S^1$. 

Now we prove the second claim. Let $\phi\colon N\to\mathfrak t^*$ be the moment map of the $T$-action on $N$. Let $\xi = (\xi_1, \xi_2)\in\mathfrak t\times\R$ (we can take $\xi_2=\frac{\partial}{\partial\theta}$ if $\xi_1$ is suitably chosen). Up to a translation of the moment map $\phi$ by a constant, we may assume that $\phi^{\xi_1}\neq 0$.
First assume $n\in N$ is a fixed point for the $T$-action on $N$. Since $\xi_1\in\mathfrak t$ is generic, 
$n$ is a fixed point of $\exp(t\xi_1)$,  so ${\xi_1}_N(n)=0$. Then for any $m\in\pi^{-1}(n)$,   the horizontal lift $\Tilde{{\xi_1}_N} (m) = 0$. By (\ref{lift}),
$${\xi_1}_M(m) = 0 + \phi^{\xi_1}(n)\frac{\partial}{\partial\theta}(m).$$ 
So ${\xi_1}_M$ and ${\xi_2}_M$ are linearly dependent, both in the direction of $\frac{\partial}{\partial\theta}$, so the $\xi_M$-action through the point $m$ produces a closed Reeb orbit for the 1-form 
$\alpha'$. Conversely, if $n\in N$ is not a fixed point for the $T$-action on $N$, then 
${\xi_1}_N(n)\neq 0$ since $\xi_1\in\mathfrak t$ is generic. Then for any $m\in\pi^{-1}(n)$,
$${\xi_1}_M(m) = \Tilde{{\xi_1}_N} (m) + \phi^{\xi_1}(n)\frac{\partial}{\partial\theta}(m), \,\,\,\mbox{where}\,\,\, \Tilde{{\xi_1}_N} (m)\neq 0.$$
So ${\xi_1}_M$ and ${\xi_2}_M$ are linearly independent; so the closure of the Reeb orbit  through $m$ generated by $\xi_M$ is at least 2-dimensional since $\xi$ is generic, hence the Reeb orbit through $m$ is not closed.

(2) If $(N, \omega)$ is a K\"ahler manifold, by Theorem~\ref{BW2}, for the regular contact form, the connection form $\alpha$ on $M$, 
the K-contact structure $(\alpha, g)$ on $M$ is a Sasakian structure. By Theorem~\ref{thmlift}, this Sasakian structure is $T\times S^1$-invariant.  Hence the perturbed Reeb vector field $\xi_M$ of $\alpha'$ satisfies
$$L_{\xi_M} g = 0, \,\,\, \Big[\xi_M, \frac{\partial}{\partial\theta}\Big]=0,\, \,\, \alpha(\xi_M) > 0,$$
where  $\frac{\partial}{\partial\theta}$ represents the Reeb vector field of $\alpha$.
By \cite[Theorem 7.2]{YK}, the new K-contact structure $(\alpha', g')$ on $M$ in (1) is also Sasakian.
\end{proof}

\begin{lemma}\label{subgroup}
Let $N$ be a compact smooth manifold with a smooth torus $T^l$-action. Then for each $1\leq j\leq l$, there exists a subtorus $T^j\subset T^l$ action such that the fixed point set of the $T^j$-action is exactly the fixed point set of the $T^l$-action.
\end{lemma}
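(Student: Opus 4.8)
The plan is to reformulate the statement in terms of stabilizer subgroups and then reduce it to a single finiteness fact together with an elementary linear-algebra construction. Write $F = N^{T^l}$ for the fixed point set of the full torus. For any subtorus $T^j \subseteq T^l$ one automatically has $F \subseteq N^{T^j}$, so it suffices to produce a $j$-dimensional subtorus with $N^{T^j} \subseteq F$. Since $T^j$ is connected, a point $p$ lies in $N^{T^j}$ exactly when $T^j$ is contained in the identity component $\mathrm{Stab}(p)^0$ of its stabilizer. A point $p$ fails to lie in $F$ precisely when $\mathrm{Stab}(p) \neq T^l$; as a full-dimensional connected subgroup of a torus must be the whole torus, such a stabilizer has $\dim \mathrm{Stab}(p) < l$, so $\mathrm{Stab}(p)^0$ is a proper subtorus. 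Thus I want a $j$-dimensional subtorus that is not contained in the identity component of any proper stabilizer.

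The key input is that only finitely many subgroups arise as stabilizers of the $T^l$-action. First I would establish this via the slice theorem: around each $p$ a $T^l$-invariant neighborhood is equivariantly diffeomorphic to $T^l \times_{H} V$ with $H = \mathrm{Stab}(p)$ and $V$ the slice representation, and the stabilizers occurring in that neighborhood are intersections of kernels of subcollections of the finitely many weights of $V$, of which there are finitely many. Covering the compact manifold $N$ by finitely many such neighborhoods shows that the whole collection of stabilizers is finite; for a torus this is just the finiteness of orbit types, and it is the same finiteness already invoked for the circle case in Remark~\ref{tc}. Let $K_1, \dots, K_r$ be the (finitely many) identity components of the proper stabilizers; each is a proper subtorus of $T^l$, with Lie algebra a proper rational subspace $\mathfrak{k}_i \subsetneq \mathfrak{t} = \mathrm{Lie}(T^l)$.

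It then remains to find a $j$-dimensional rational subspace $W \subseteq \mathfrak{t}$, the Lie algebra of the sought $T^j$, with $W \not\subseteq \mathfrak{k}_i$ for every $i$. Because a real vector space is never a finite union of proper subspaces, the set $\mathfrak{t} \setminus \bigcup_i \mathfrak{k}_i$ is open and nonempty and hence contains a rational vector $v$. Choosing $j-1$ further rational vectors to extend $\{v\}$ to a linearly independent set and letting $W$ be their span gives a $j$-dimensional rational subspace containing $v$, so $W \not\subseteq \mathfrak{k}_i$ for all $i$. The closure $T^j = \overline{\exp(W)}$ is then a $j$-dimensional subtorus, and $T^j \subseteq K_i$ would force $W \subseteq \mathfrak{k}_i$, which is excluded; hence $T^j$ is contained in no $K_i$. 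Consequently, for every $p \notin F$ we get $T^j \not\subseteq \mathrm{Stab}(p)^0$, so $p \notin N^{T^j}$, which yields $N^{T^j} = F$, as desired.

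I expect the finiteness of the set of stabilizer subgroups to be the only genuine obstacle: once it is in hand, the passage to Lie algebras and the avoidance of finitely many proper rational subspaces are routine. The one point to handle with care is that stabilizers of a torus action can be disconnected, which is why I work throughout with the identity components $\mathrm{Stab}(p)^0$ and exploit the connectedness of $T^j$ to move between $\mathrm{Stab}(p)$ and $\mathrm{Stab}(p)^0$.
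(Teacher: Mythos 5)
Your proof is correct and rests on the same idea as the paper's: finiteness of the stabilizer subgroups on the compact manifold $N$, followed by a generic choice of subtorus avoiding the finitely many proper identity components. The paper phrases this more tersely---it first invokes the standard fact that some circle $S^1\subset T^l$ has the same fixed point set and then enlarges it by any subtorus of $T^l/S^1$---whereas you construct the $j$-dimensional subtorus directly at the Lie algebra level and supply the slice-theorem justification of finiteness, but the substance is the same.
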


\begin{proof}
Since $N$ is compact, there exists an $S^1\subset T^l$ such that the $S^1$-action and the $T^l$-action have the same fixed point set. Take any subgroup of $T^l/S^1$ and identify it with a subgroup $T'\subset T^l$, then the $S^1\times T'$-action has the same fixed point set as the $S^1$-action, and as the $T^l$-action. 
\end{proof}

We can now prove Theorem~\ref{thm3}.

\begin{proof}[Proof of Theorem~\ref{thm3}]
By Lemma~\ref{subgroup}, for each $1\leq j\leq l$, there exists a subtorus $T^j\subset T^l$ action such that the fixed point set of the $T^j$-action is exactly the fixed point set of the $T^l$-action. 
Since the $T^l$-action is Hamiltonian, the $T^j$-action is also Hamiltonian.
For each fixed subgroup $T^j$, apply Proposition~\ref{pert} for the Hamiltonian $T^j$-action on $N$, we obtain the claims.
\end{proof}

For a better understanding of Proposition~\ref{pert}, we give a standard example the sphere $S^{2n+1}$ below. Some of the information appearing below is given in \cite{{R}, {Y}}.

\begin{example}\label{sphere}
Consider the unit sphere $S^{2n+1}$ 
$$\big\{(z_1, z_2, \cdots, z_{n+1})\in \C^{n+1} \,|\, |z_1|^2 + |z_2|^2 +\cdots + |z_{n+1}|^2 =1\big\}$$
with the standard contact form 
$$\alpha=\sum_i (x_i dy_i - y_i dx_i)$$ 
induced from the standard symplectic form $\omega = 2\sum_i dx_i\wedge dy_i$ on $\C^{n+1}$. The Reeb vector field of $\alpha$ is 
$$R = \sum_i\Big(x_i\frac{\partial}{\partial y_i} - y_i\frac{\partial}{\partial x_i}\Big).$$ 
This vector field is induced by the
$\R$-action, the Reeb flow of $\alpha$, given by
\begin{equation}\label{R}
t\cdot (z_1, z_2, \cdots, z_{n+1}) = (e^{it}z_1, e^{it} z_2, \cdots, e^{it} z_{n+1}), \quad \mbox{where}\,\, t\in\R.
\end{equation}
This flow generates a free $S^1$-action, hence the contact form $\alpha$ is regular.  We have an $S^1$-prinpical bundle $S^{2n+1}\to S^{2n+1}/S^1 \approx\CP^n$, the hopf fibration. A
K-contact metric adapted to $\alpha$ is 
$$g= g_0 + \alpha\otimes\alpha,$$ 
where 
$$g_0 = \sum_i \big(dz_i\otimes d\bar z_i \big)|_{\ker(\alpha)}=\sum_i\big(dr_i\otimes dr_i + r_i^2d\theta_i\otimes d\theta_i \big)|_{\ker(\alpha)}.$$

   Now we take $\lambda_1, \lambda_2, \cdots, \lambda_{n+1}$ to be rationally linearly independent positive real numbers, and take the vector field $R_{\lambda}$  given by 
$$R_{\lambda} =\sum_i\lambda_i \Big(x_i\frac{\partial}{\partial y_i} - y_i\frac{\partial}{\partial x_i}\Big).$$ 
Then we take a perturbed contact one-form of $\alpha$: 
\begin{equation}\label{nalpha}
\alpha' = \frac{\alpha}{\alpha(R_{\lambda})}= \sum_i \frac{(x_i dy_i - y_i dx_i)}{\sum_i (\lambda_i (x_i^2 + y_i^2))}.
\end{equation}
 The Reeb vector field of $\alpha'$ is $R_{\lambda}$, and $R_{\lambda}$ is generated by the $\R$-action on $S^{2n+1}$ given by
\begin{equation}\label{R'}
t\cdot (z_1, z_2, \cdots, z_{n+1}) = (e^{i\lambda_1t}z_1, e^{i\lambda_2t} z_2, \cdots, e^{i\lambda_{n+1}t} z_{n+1}), \quad \mbox{where}\,\, t\in\R.
\end{equation}
Note that there is a natural $T^{n+1}$-action on $S^{2n+1}$ induced from the $T^{n+1}$-action on $\C^{n+1}$. The
 $\R$-action (\ref{R'}) acts as a subgroup of $T^{n+1}$. The action in (\ref{R'}) has $n+1$ number of closed orbits, given by $(0, \cdots, e^{i\lambda_it}z_i, \cdots, 0)$, where $i=1, \cdots, n+1$. They correspond to the $n+1$ number of fixed points $[(0, \cdots, e^{i\lambda_it}z_i, \cdots, 0)]$
for the $T^{n+1}/S^1 = T^n$-action on $S^{2n+1}/S^1\approx \CP^n$, 
where the $S^1$-action on $S^{2n+1}$ is generated by the Reeb vector field $R$, or the action given by (\ref{R}).
Other Reeb orbits of $\alpha'$ are not closed. Note that the element $\xi = (\lambda_1, \lambda_2, \cdots, \lambda_{n+1})\in \R^{n+1}=\,$Lie$(T^{n+1})$ is generic, i.e., 
the subgroup $\exp(t\xi)$ is dense in $T^{n+1}$, since the $\lambda_i$'s are rationally linearly independent. 

By Proposition~\ref{Yama}, there exists a  K-contact metric adapted to $\alpha'$, it can be chosen to be
$$g' = g'_0 + \alpha'\otimes\alpha',\,\,\,\mbox{where}\,\,\, g'_0 = \frac{g_0}{\alpha(R_{\lambda})}|_{\ker(\alpha')}.$$

The K-contact structures $(\alpha, g)$ and $(\alpha', g')$ are Sasakian structures.
\end{example}

\begin{remark}
Similarly as in Example~\ref{sphere}, if  
$\xi = (\lambda_1, \lambda_2, \cdots, \lambda_{n+1})\in \R^{n+1}=\,$Lie$(T^{n+1})$ such that $\exp(t\xi)$ is dense in a $k$-dimensional torus $T^k$ of  $T^{n+1}$ with $k< n+1$, then by Proposition~\ref{Yama}, we have a  K-contact structure on $S^{2n+1}$ of rank $k$. The corresponding contact form then has non-isolated closed Reeb orbits. This is a different way of constructing K-contact structures of smaller rank from the construction in Theorem~\ref{thm3}, where the smaller dimensional torus has the same set of closed Reeb orbits as the bigger original torus.
\end{remark}

\section{examples of simply connected compact K-contact manifolds with minimal closed Reeb orbits which are not homeomorphic to the spheres --- proof of Theorem~\ref{thm4}}

In this section, we give a collection of examples of simply connected compact K-contact manifolds with minimal closed Reeb orbits that are not homeomorphic to the spheres. These examples are all real cohomology spheres.

\smallskip 

First, recall that by Theorem~\ref{thm2}, if $(N, \omega)$ is a compact symplectic manifold of dimension $2n$ equipped with a Hamiltonian $S^1$-action with $n+1$ isolated fixed points,
$[\omega]=x$ is primitive integral, and $H^*(N; \Z)=\Z[x]/x^{n+1}$, then the total space $M$ of the principal circle bundle over $N$ with first Chern class $x= [\omega]$ is homeomorphic to $S^{2n+1}$. 
By Theorem~\ref{thm3}, the manifold $M$ has a K-contact structure of rank $2$ which has exactly $n+1$ closed Reeb orbits. In particular, if 
$N=\CP^n$, then $M$ is $S^{2n+1}$. The sphere $S^{2n+1}$ has a standard regular K-contact (Sasakian) structure of rank 1. Since $\CP^n$ admits a Hamiltonian $T^n$-action with $n+1$ isolated fixed points, $S^{2n+1}$ admits a K-contact (Sasakian) structure of any rank $2\leq k\leq n+1$ which has $n+1$ number of closed Reeb orbits by Theorem~\ref{thm3}. 

Next, using Theorems~\ref{thm2} and \ref{thm3}, we will get a collection of compact K-contact manifolds of dimension $2n+1$ with $n+1$ number of closed Reeb orbits which are not homeomorphic to 
$S^{2n+1}$.  

First, in Example~\ref{exgrass}, we have a family of examples for any $n\geq 3$ odd. 

\begin{example}\label{exgrass}
Let $(N, \omega)$ be a compact symplectic manifold of dimension $2n$ admitting a Hamiltonian $S^1$-action with exactly $n+1$ isolated fixed points, where $n\geq 3$ is odd, and $x=[\omega]$ is primitive integral. Assume the integral cohomology ring $H^*(N; \Z)$ is generated by
$$1, \,x, \,\cdots,\, x^{\frac{n-1}{2}},\, \frac{1}{2}x^{\frac{n+1}{2}},\, \cdots,\, \frac{1}{2}x^{n-1},\, \frac{1}{2}x^n.$$
For example, $\Gt_2(\R^{n+2})$, the Grassmannian of oriented two planes in $\R^{n+2}$ for any $n\geq 3$ odd, can be such a manifold $N$, see \cite{L}; since $\Gt_2(\R^{n+2})$ is a coadjoint orbit of 
$SO(n+2)$, it is a K\"ahler manifold. Note that $N$ may be different from $\Gt_2(\R^{n+2})$ as a manifold.
Let $\pi\colon M\to N$ be the principal $S^1$-bundle over $N$ with first Chern class $x$. By Theorem~\ref{thm2} (1), $M$ is not homeomorphic to $S^{2n+1}$. By Theorem~\ref{thm1} (1), $M$ has the real cohomology ring of $S^{2n+1}$. By Lemma~\ref{pi1m} (2), 
$$\pi_1(M)=1.$$

To get the precise integral cohomology groups (ring) of $M$, we can
use the Gysin exact sequence for the circle bundle $\pi\colon M\to N$:
$$\cdots\to H^*(M; \Z)\to H^{*-1}(N; \Z)\to H^{*+1}(N; \Z)\to H^{*+1}(M; \Z)\to\cdots,$$
and get
 \[  H^*(M; \Z) = \left\{ \begin{array}{ll}
           \Z    &  \mbox{if $ * = 0, 2n+1$},\\
           \Z_2  &    \mbox{if $* =n+1$},\\
           0    & \mbox{other $*$'s}.
           \end{array} \right. \]

By Theorem~\ref{thm3}, the simply connected manifold $M$ admits a K-contact structure of rank $2$ with exactly $n+1$ closed Reeb orbits.  In particular, if $N=\Gt_2(\R^{n+2})$, then 
$M=V_2(\R^{n+2})=SO(n+2)/SO(n)=O(n+2)/O(n)$, the Stiefel manifold of orthonormal 2-frames in $\R^{n+2}$. The maximal torus of $SO(n+2)$ for $n\geq 3$ odd is $T^{\frac{n+1}{2}}$. As a coadjoint orbit of $SO(n+2)$,  $\Gt_2(\R^{n+2})$ admits a Hamiltonian $T^{\frac{n+1}{2}}$-action with $n+1$ isolated fixed points (see \cite{Mo}), by Theorem~\ref{thm3}, we can have K-contact structures of different ranks on 
$V_2(\R^{n+2})$ which have exactly $n+1$ closed Reeb orbits.  Since $\Gt_2(\R^{n+2})$ is K\"ahler, 
 these K-contact structures on $V_2(\R^{n+2})$ are Sasakian.
\end{example}

\begin{example}\label{exg2}
Let $(N, \omega)$ be a compact symplectic manifold of dimension $10$ admitting a Hamiltonian $S^1$-action with exactly 6 isolated fixed points, where $x=[\omega]$ is primitive integral. Assume the integral cohomology ring $H^*(N; \Z)$ is generated by
$$1, \, x, \, \frac{1}{3}x^2, \, \frac{1}{6}x^3, \, \frac{1}{18}x^4, \, \frac{1}{18}x^5.$$
For example, a 10-dimensional coadjoint orbit of the Lie group $G_2$, denoted $\mathcal O$, is such a manifold,  see \cite{L'}. As a coadjoint orbit of $G_2$,
$\mathcal O$ is a K\"ahler manifold. Note that we may have other such manifolds than $\mathcal O$.
Let $\pi\colon M\to N$ be the principal circle bundle over $N$ with first Chern class $x$.
By Theorem~\ref{thm2} (1), $M$ is not homeomorphic to $S^{11}$. By Theorem~\ref{thm1} (1), $M$ has the real cohomology ring of $S^{11}$.  By Lemma~\ref{pi1m} (2), 
$$\pi_1(M)=1.$$

By the Gysin exact sequence for the circle bundle $\pi\colon M\to N$,
we can get the precise integral cohomology groups (ring) of $M$: 
 \[  H^*(M; \Z) = \left\{ \begin{array}{ll}
           \Z    &  \mbox{if $ * = 0, 11$},\\
           \Z_3  &    \mbox{if $* =4$},\\
           \Z_2  &    \mbox{if $* =6$},\\
           \Z_3  &    \mbox{if $*=8$},\\
           0    & \mbox{other $*$'s}.
           \end{array} \right. \]

By Theorem~\ref{thm3}, the simply connected $11$-dimensional manifold $M$ admits a K-contact structure of rank 2 with exactly $6$ closed Reeb orbits.  In particular, if $N=\mathcal O$, 
it can be written as a quotient by $U(2)$, i.e., $\mathcal O = G_2/U(2)$. The Lie group $G_2$ is simply connected; by \cite[Theorems 2 and 11]{K}, the principal circle bundles over $\mathcal O =G_2/U(2)$, as a group, is generated by $G_2/SU(2)$, where $SU(2)$ is the semi-simple part of $U(2)$. Hence  the principal circle bundle over $\mathcal O = G_2/U(2)$ with
first Chern class $x$, the generator of $H^2(\mathcal O, \Z)$, is $M=G_2/SU(2)$. By Theorem~\ref{thm3}, the rank 2 K-contact structure on $M=G_2/SU(2)$ is Sasakian. 
Moreover, the maximal torus $T^2$ of $G_2$ acts on $\mathcal O = G_2/U(2)$ in a Hamiltonian way also with $6$ isolated fixed points, see \cite{L'}. Theorem~\ref{thm3} gives a K-contact (Sasakian) structure of rank 3 on $M=G_2/SU(2)$ with exactly $6$ isolated closed Reeb orbits.
\end{example}

\begin{example}\label{exv5}
Let $(N, \omega)$ be a compact symplectic manifold of dimension $6$ admitting a Hamiltonian $S^1$-action with exactly $4$ isolated fixed points, where $x=[\omega]$ is primitive integral. Assume the integral cohomology ring $H^*(N; \Z)$ is 
generated by
$$1, \,\,x,\, \,\frac{1}{5}x^2,\,\, \frac{1}{5}x^3.$$
For example, $V_5$, a fano manifold, is such a manifold. The fact that a compact $6$-dimensional Hamiltonian $S^1$-manifold with exactly 4 isolated fixed points can have the above integral cohomology ring structure is discovered by Tolman in \cite{T}. In \cite{Mc}, McDuff gives $V_5$ as such an example, and shows that $V_5$ is the only such manifold up to $S^1$-equivariant symplectomorphism;  moreover, $V_5$ is an $S^1$-invariant K\"ahler manifold.
Let $\pi\colon M\to N$ be the principal circle bundle over $N$ with first Chern class $x$. 
By Theorem~\ref{thm2} (1), $M$ is not homeomorphic to $S^7$. By Theorem~\ref{thm1} (1), $M$ has the real cohomology ring of $S^7$. By Lemma~\ref{pi1m} (2), 
$$\pi_1(M)=1.$$

The Gysin exact sequence for the circle bundle $\pi\colon M\to N$ gives the precise integral cohomology groups (ring) of $M$: 
 \[  H^*(M; \Z) = \left\{ \begin{array}{ll}
           \Z    &  \mbox{if $ * = 0, 7$},\\
           \Z_5  &    \mbox{if $* =4$},\\
           0    & \mbox{other $*$'s}.
           \end{array} \right. \]

By Theorem~\ref{thm3},  the simply connected $7$-dimensional manifold $M$ admits a K-contact structure of rank 2 with exactly $4$ closed Reeb orbits. Due to the uniqueness of $N=V_5$, the K-contact structure on $M$ is a Sasakian structure.
\end{example}

\begin{example}\label{exv22}
Let $(N, \omega)$ be a compact symplectic manifold of dimension $6$ admitting a Hamiltonian $S^1$-action with eactly $4$ isolated fixed points, where $x=[\omega]$ is primitive integral. Assume the integral cohomology ring  $H^*(N; \Z)$ is generated by
$$1, \,\, x, \,\, \frac{1}{22}x^2, \,\,\frac{1}{22}x^3.$$
For example, $V_{22}$, a fano manifold, is such a manifold. Similar to Example~\ref{exv5}, 
$V_{22}$ is the unique such manifold up to $S^1$-equivariant symplectomorphism, and it is an $S^1$-invariant K\"ahler manifold, see \cite{T, Mc}. Let $\pi\colon M\to N$ be the principal circle bundle over $N$ with first Chern class $x$. By Theorem~\ref{thm2} (1), $M$ is not homeomorphic to $S^7$. By Theorem~\ref{thm1} (1), $M$ has the real cohomology ring of $S^7$. By Lemma~\ref{pi1m} (2), 
$$\pi_1(M)=1.$$

The Gysin exact sequence for the circle bundle $\pi\colon M\to N$ gives the integral cohomology groups (ring) of $M$: 
 \[  H^*(M; \Z) = \left\{ \begin{array}{ll}
           \Z    &  \mbox{if $ * = 0, 7$},\\
           \Z_{22}  &    \mbox{if $* =4$},\\
           0    & \mbox{other $*$'s}.
           \end{array} \right. \]

By Theorem~\ref{thm3}, the simply connected $7$-dimensional manifold $M$ admits a K-contact structure of rank $2$ which has exactly $4$ closed Reeb orbits. Due to the uniqueness of $N=V_{22}$, the K-contact structure on $M$ is a Sasakian structure.
\end{example}

For the examples in Example~\ref{exv5} and \ref{exv22}, we do not know if the 7-dimensional contact manifolds, the total spaces of the circle bundles over $V_5$ and $V_{22}$ are known and if they have names.

\section{examples of simply connected K-contact manifolds which are not real cohomology spheres}

In this section, through the Boothby-Wang fibration, we give a family of examples of simply connected compact K-contact manifolds of dimension $2n+1$ which are not real cohomology spheres, and admit K-contact structures with  more than $n+1$ number of isolated closed Reeb orbits, by our Theorems~\ref{thm1} and \ref{thm3}.

\begin{example}\label{evengrass}
Let $(N, \omega)$ be a compact symplectic manifold of dimension $2n$ admitting a Hamiltonian $S^1$-action with exactly $n+2$ isolated fixed points, where $x=[\omega]$ is primitive integral. By \cite{L21}, $n$ must be even. For example, $\Gt_2(\R^{n+2})$, the Grassmannian of oriented two planes in $\R^{n+2}$ with $n > 2$ even, is such a manifold, and $\Gt_2(\R^{n+2})$ is a K\"ahler manifold as 
a coadjoint orbit of $SO(n+2)$,  see \cite{L21}.
Let $\pi\colon M\to N$ be a principal circle bundle over $N$ with first Chern class $x$. By Theorem~\ref{BW2}, $(M, \alpha)$ is a K-contact manifold, where $\alpha$ is a connection one form on $M$ such that $\pi^*(\omega) = d\alpha$. 
By \cite{L21}, the manifold $N$ has odd Betti numbers all zero, and even Betti numbers as follows:
$$b_{2i}(N)=1\,\,\, \mbox{for each $0\leq 2i\leq 2n$ and $2i\neq n$,  and} \,\,\, b_n(N) = 2.$$
By Theorem~\ref{thm1} (1), $M$ is not a real cohomology sphere. We can use
 the Gysin exact sequence  in $\R$-coefficients
$$\cdots\to H^*(M; \R)\to H^{*-1}(N; \R)\to H^{*+1}(N; \R)\to H^{*+1}(M; \R)\to\cdots$$
to get the non-zero Betti numbers of $M$:
$$b_0 (M) = b_n(M) = b_{n+1}(M) = b_{2n+1}(M) = 1.$$
(We can also use the Gysin exact sequence in $\Z$-coefficients to obtain the integral cohomology of $M$.)
We still have $H^2(N; \Z) = \Z$ (\cite{L21}),  by Lemma~\ref{pi1m} (2), 
$$\pi_1(M)=1.$$
By Theorem~\ref{thm3}, the simply connected manifold $M$ admits a K-contact structure of rank 2 with $n+2$ closed Reeb orbits.  In particular, if $N=\Gt_2(\R^{n+2})$ with $n > 2$ even, then 
$M=V_2(\R^{n+2})=SO(n+2)/SO(n)=O(n+2)/O(n)$, the Stiefel manifold of orthonormal 2-frames in $\R^{n+2}$. The maximal torus of $SO(n+2)$ with $n > 2$ even is $T^{\frac{n}{2}+1}$. The  K\"ahler manifold $\Gt_2(\R^{n+2})$ admits a Hamiltonian $T^{\frac{n}{2}+1}$-action with $n+2$ isolated fixed points (\cite{Mo}). By Theorem~\ref{thm3}, we have K-contact (Sasakian) structures on $V_2(\R^{n+2})$ of different ranks with $n+2$ closed Reeb orbits.  
\end{example}

\end{document}